\newtheorem{theorem}{Theorem}[section]
\newtheorem{lemma}[theorem]{Lemma}
\newtheorem{corollary}[theorem]{Corollary}
\newtheorem{proposition}[theorem]{Proposition}
\theoremstyle{definition}
\theoremstyle{remark}
\newtheorem{remark}[theorem]{Remark}
\numberwithin{equation}{subsection}
\begin{document}

%AUTHORS
%------------------------------------------------------------------------------------------------
	\date{\today}
	\title[On the length of cohomology spheres]{On the length of cohomology spheres}
	
	\author[D. de Mattos]{Denise de Mattos}
	\address{\textup{Denise de Mattos:}
		Universidade de S\~{a}o Paulo\\
		Instituto de Ci\^{e}ncias Matem\'{a}ticas e de Computa\c{c}\~{a}o\\
		Avenida Trabalhador S\~{a}o-carlense, 400 - Centro\\
		CEP: 13566-590, S\~{a}o Carlos, SP Brazil} 
	\email{deniseml@icmc.usp.br}
	
	\author[E. dos Santos]{Edivaldo Lopes dos Santos}
	\address{\textup{Edivaldo Lopes dos Santos:}
		Universidade Federal de S\~{a}o Carlos\\
		Departamento de Matem\'{a}tica\\
		Km 235, Rodovia Washington Lu\'{i}s - Jardim Guanabara\\
		CEP: 13565-905 S\~{a}o Carlos, SP Brazil} 
	\email{edivaldo@dm.ufscar.br}
	
	\author[N. Silva]{Nelson Antonio Silva}
	\address{\textup{Nelson Antonio Silva:}
		Universidade Federal de Lavras\\
		Departamento de Ci\^{e}ncias Exatas\\
		Av. Doutor Sylvio Menicucci, 1001 - Kennedy\\
		CEP: 37200-000 Lavras, MG Brazil} 
	\email{nelson.silva@ufla.br}

%KEYWORDS
%------------------------------------------------------------------------------------------------	
	\keywords{Cohomological length, Cohomology Spheres, Borsuk-Ulam theorem, Bourgin-Yang theorem,
	equivariant map} \subjclass[2010]{Primary 55M10, 55M30;\; Secondary55N91}

%ABSTRACT
%------------------------------------------------------------------------------------------------
	\begin{abstract}
		In \cite{Bartsch}, T. Bartsch 
		provided detailed and broad exposition of a numerical cohomological
		index theory for $ G $-spaces, known as the {length}, where $ G $ is a compact Lie
		group. We present the {length} of $ G $-spaces which are cohomology spheres and
		$ G $ is a $p$-torus or a torus group, where $p$ is a prime.
		As a consequence, we obtain Borsuk-Ulam and Bourgin-Yang type theorems 
		in this context. A sharper version of the Bourgin-Yang theorem 
		for topological manifolds is also proved. Also, we give some  general 
		results regarding the upper and lower bound for the length. 
	\end{abstract}

%TITLE
%----------------------------------------------------------------------------------------------
	\maketitle

%INTRODUCTION
%----------------------------------------------------------------------------------------------
	\section{Introduction}\label{section: introduction}
	Let $G$ be a compact Lie group. In \cite{Bartsch}, Thomas Bartsch discuss in
	details the properties and results of a	numerical cohomological index theory known as 
	$(\mathcal{A}, h^*, I)$-length, or simply, the length $\ell$ (Definition \ref{definition: l_H}). 
	Like all index theories, the length is a powerful tool in the study of equivariant maps
	and, in recent years, it has been used to prove different versions of Borsuk-Ulam and 
	Bourgin-Yang type theorems \cite{BMS,DEM,DEM1}.
	
	In the study of critical points of functions with symmetry, Bartsch and Clapp \cite{BartschClapp}
	computed the value of the length of representation spheres $S(V)$ 
	\cite[Proposition 2.4]{BartschClapp} for  $ p $-torus  
	or torus groups, i.e., $G=(\mathbb{Z}_p)^k$ or $(S^1)^k$, where $k\geq 1$ and $p$ prime. 
	Considering such groups, we present a more general result by 
	providing the length of the pair $(X,X^G)$,
	where  $X$ is a compact $G$-space that is a cohomology spheres and  
	$X^G$ is the fixed point set (Theorem \ref{theorem: length_cohom_sphere}). This is achieved by considering the
	splitting principle in the Euler class $e(X,X^G)$ of the oriented pair $(X,X^G)$
	associated with the fibration $X_G\to BG$ given by the Borel construction 
	\cite{Hsiang,Dieck}.

	As an immediate consequence of monotonicity of the length, a Borsuk-Ulam type
	result is obtained (Corollary \ref{theorem: bu}). This provides necessary conditions on the existence
	of equivariant maps between cohomology spheres under the actions of $p$-torus or torus. 
	Motivated by results in \cite{BMMS,DEM}, regarding the sufficient conditions
	to the Borsuk-Ulam theorem, we provide a result for equivariant maps between $G$-ANR spaces
	which are cohomology spheres and representation spheres in the case $G=(\mathbb{Z}_p)^k$
	(Theorem \ref{theorem: bureverse}).
	We remark that an alternative proof could be done by using the results provided by
	\cite[Chapter II]{Dieck} and nicely presented in \cite[Theorem 3.2]{BMMS} for this
	context. Our proof rely on the calculation of the equivariant 
	Lusternik-Schnirelmann $\mathcal{A}$-cat and $\mathcal{A}$-genus \cite[Definitions 2.6 and 2.8]{Bartsch} of such cohomology spheres. As a corollary we conclude that the Euler class of a $(\mathbb{Z}_p)^k$-ANR cohomology sphere will be polynomial, where $p$ is an odd prime (Corollary \ref{corollary: e_polynomial}).
	
	Yang\cite{Yang,Yang2} and, independently, Bourgin\cite{Bourgin}
	proved that if $ f\colon S^{n-1}\to \mathbb{R}^m $ is a $\mathbb{Z}_2$-equivariant map 
	then $\dim Z_f\geq n-m-1$, where $ Z_f=f^{-1}(0) $ and $ ``\textup{dim}" $ stands for
	covering dimension (this is the so called Bourgin-Yang  theorem). 
	Consequently, if $ n>m $, then $ \dim Z_f\neq \emptyset $. 
	Hence, there is no	$ \mathbb{Z}_2 $-equivariant map $ S^n\to S^{m} $ with respect to the
	antipodal action which implies the classical Borsuk-Ulam theorem \cite{Borsuk}. 	
	B\l{}aszczyk et. al. \cite{BMS} presented abstract and general 
	versions of the Bourgin-Yang theorem in different settings by  making use
	of the length. These results depend on estimations of lower- and upper-bounds for $\ell$.
	In this sense, we present some general estimations. Namely, we give an upper-bound for
	the length (Theorem \ref{theorem: length_ubound}) of compact $G$-spaces with no 
	fixed points in terms of covering dimension where 
	$G$ is any compact Lie group. This allows us to give a certain type of Bourgin-Yang theorem
	considering the $p$-torus and torus groups and recover some classical results in particular
	cases. In spite the fact this is a rough estimation, 
	we remark is the best one could get with 
	our choices for the definition of length (Remark \ref{remark: dim_best}).
	In the case the domain $ X $ is a topological closed orientable manifold and 
	$ n $-acyclic over the field corresponding to the $p$-torus or torus group, 
	and using a totally different
	technique, following \cite[Theorem 2.1]{DEM}, a version of this theorem with optimal
	estimate is obtained.
	
	At last, we remark that the length can be used to obtain results 
	related with the classical Borel formula (Theorem \ref{theorem: borel_euler}). 
	We present an lower-bound for the length
	of any compact $G$-spaces in terms of the length of fixed points set $X^H$,
	where $H$ are subtorus of rank $k-1$ of $p$-torus or torus group $G$ of rank $k$ (Theorem \ref{theorem: length_lbound}).

%PRELIMINARIES
%----------------------------------------------------------------------------------------------
\section{Preliminaries}\label{section: preliminaries}

	Let $G$ be a $p$-torus or a torus group of rank $k\geq 1$. We will distinguish the cases
	by the following: $G=(\mathbb{Z}_p)^k$ for $p\geq 2$  or
	$G=(S^1)^k$ for $p=0$.
	A $p$-subtorus of rank $t\leq k$ of $G$ will be a subgroup $(S^1)^t$ (for $p=0$) or  $(\mathbb{Z}_p)^t$
	(for $p\geq 2$).
	We consider the category of paracompact G-pairs $(X,A)$, where $X$ is a paracompact Hausdorff
	space and $A$ is a closed subspace. 
%	The orbit of a point $x\in X$ is the $G$-subspace $G(x)=\{gx|\,g\in G\}\}$ and 
	The isotropy subgroup of $x\in X$ is $G_x=\{x\in X| gx=x,\,\forall g\in G\}$ and
	the orbit of $x$ is the $G$-subspace $G(x)=\{gx|\,g\in G\}\}\cong G/G_x$. 
	The orbit space of the $G$-space $X$ will be denoted by $X/G$.
	For any closed subgroup $H$ of $G$,  $X^H=\{x\in X| hx=x,\forall h\in H\}$ is the set of fixed points in $X$ by the induced $H$-action. 

	Let $X_G=(EG\times X)/G$ be the Borel space where is $EG$ is the total space of
	the universal principal  $G$-bundle $EG\to BG$ and $BG=EG/G$ is the universal classifying space.
	For a $G$-pair $(X,A)$, we denote $H_G^*(X,A;\mathbb{F}) = H^*(X_G,A_G;\mathbb{F})$, 
	the Borel equivariant cohomology, where $H^*$  will always be the \v{C}ech cohomology
	and $\mathbb{F}=\mathbb{Z}_p$ or $\mathbb{Q}$ whether $p\geq 2$ or $p=0$ . The map $p_X^*\colon H^*(BG;\mathbb{F})\to H_G^*(X;\mathbb{F})$, 
	induced by $X\to \{\rm pt\}$, gives a $H^*(BG;\mathbb{F})$-module
	structure on $H_G^*(X;\mathbb{F})$ (also on $H_G^*(X,A;\mathbb{F})$) by $xy:=p_X^*(x)\cup y\in H_G^{m+n}(X;\mathbb{F})$,
	for $x\in H^m(BG;\mathbb{F})$ and $y\in H_G^n(X;\mathbb{F})$. We mainly deal with the following cohomology rings
	$H^*(BG;\mathbb{F})$.
	
	\begin{itemize}
		\item If  $ G=(\mathbb{Z}_2)^k $ then
		$ H^*(BG;\mathbb{Z}_2)\cong \mathbb{Z}_2[t_1,\ldots,t_k] $, 
		where $ t_i\in H^1(BG;\mathbb{Z}_2) $.
		\item If $ G=(\mathbb{Z}_p)^k $, then
		$ H^*(BG;\mathbb{Z}_p)\cong \mathbb{Z}_p[t_1,\ldots,t_k]\otimes_{\mathbb{Z}_p}
		\Lambda(s_1,\ldots,s_k) $, 	where $p>2$, $ t_i\in H^2(BG;\mathbb{Z}_p) $ and
		$ s_i\in H^1(BG;\mathbb{Z}_p) $.
		\item If $ G=(S^1)^k $, then
		$ H^*(BG;\mathbb{Q})\cong \mathbb{Q}[t_1,\ldots,t_k] $, 
		where $ t_i\in H^2(BG;\mathbb{Q}) $.
	\end{itemize}

	In the case $p>2$, we set $P^*(G)=\mathbb{Z_p}[t_1,\ldots,t_k]$ the polynomial
	part of $H^*(BG;\mathbb{Z}_p)$. Since we mainly deal
	with $p$-tori groups, we shall suppress the coefficient field $\mathbb{F}$ and keep the choices
	as above. 
	
	For any subtorus $H$ of $G$, we have $H_G^*(G/H)\cong H^*(BH)$. In the particular case
	$p=0$ (or $2$) and $H$ is a subtorus of rank $k-1$,  the kernel of the map $H^*(BG)\to H^*(BH)$,
	induced by	$G/H\to G/G$, is a principal ideal $(s_H)$, where $s_H\in H^2(BG)$ 
	(or $s_H\in H^1(BG)$). 
	For $p>2$, $P^*(G)\cap \ker[H^*(BG)\to H^*(BH)]=(s_H)$
	and $s_H\in H^2(BG)$.

%
%----------------------------------------------------------------------------------------------
\subsection{The length.}\label{definition: l_H}
	Fix a set $\mathcal{A}$ of $G$-spaces and $I$ an ideal of the cohomology ring $H^*(BG)$. 
	Let $(X,A)$ be a $G$-pair.
	The $(\mathcal{A}, H_G^*,I)\textup{-length} $
	of $ (X,A) $ is the smallest integer 
	$ \lambda\geq 0 $ such that there exist $ A_1,\ldots,A_{\lambda}\in \mathcal{A}$
	that for any $\omega_i\in I\cap \textup{ker}\left[ 
	H^*(BG)\longrightarrow H_G^*(A_i)\right]$, $1\leq i\leq \lambda$, 
	we have $\omega_1\cdots\omega_\lambda\cdot \gamma = 0\in H_G^*(X,A)$,
	for all $ \gamma\in H_G^*(X,A) $. If such $\lambda$ does not exist, we write
	that $(\mathcal{A}, H_G^*,I)\textup{-length} $ of $(X,A)$ is $\infty$.

	We shall make standard choices for $\mathcal{A}$ and $I$:
	\begin{itemize}
		\item $\mathcal{A} =\{G/H;\,H\subsetneq G\textup{ closed subgroup}\}$. This is
		equivalent, in terms of the value of the length, 
		to $\mathcal{A}'=\{G/H;\,H\textrm{ has rank }k-1\}$
		\cite[Observation 5.5]{Bartsch};
		\item For $p=0,2$, $I=H^*(BG)$ and for $p>2$, $I=P^*(G)$ the polynomial part.
		\item For a subtorus $H$ of rank $k-1$, we set $\mathcal{A}_H=\{G/H\}$
		and $I_H=(s_H)$.
	\end{itemize}

	We simply write $\ell$ (or $\ell_H)$ instead of $(\mathcal{A},H_G^*,I)$-length 
	(or $(\mathcal{A}_H,H_{G/H}^*,I_H)$-length). Also, in the case $A=\emptyset$, we write
	$\ell(X)$ (or $\ell_H(X)$) instead of $\ell(X,\emptyset)$ (or $\ell_H(X,\emptyset)$). 

	\begin{proposition}[{\cite[Proposition 4.7 and Corollary 4.9]{Bartsch}}]\label{proposition: length_prop}
		Let $X$, $Y$ be two
		$G$-spaces:
		\begin{itemize}
			\item[i)] If $f\colon X\to Y$ is a $G$-equivariant map, then 
				  $\ell(X)\leq \ell(Y)$.
			\item[ii)] $\ell(X)\leq\mathcal{A}\textrm{-genus}(X)$, where the $\mathcal{A}$-genus of
			$X$ is the least integer $ t\geq 0 $ such that there exists	a $ G $-equivariant map 
			$ X\longrightarrow A_1\star \cdots \star A_t $, where $ A_i\in\mathcal{A} $ for $ i=1,\ldots,t$ and  $``\star"$ means the join operation.
			\item[iii)] If $X$ is a compact $G$-space such that $X^G=\emptyset$, then 
			$\ell(X)<\infty.$
		\end{itemize}
	\end{proposition}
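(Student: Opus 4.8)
To prove Proposition~\ref{proposition: length_prop} I would treat the three items in order, the first two being formal consequences of naturality of the Borel construction and of a decomposition of the join, and the third reducing to finiteness of the $\mathcal{A}$-genus. For (i), the plan is to note that a $G$-equivariant map $f\colon X\to Y$ induces a map $f_G\colon X_G\to Y_G$ lying over $BG$, so the ring homomorphism $f_G^*\colon H_G^*(Y)\to H_G^*(X)$ satisfies $f_G^*\circ p_Y^*=p_X^*$, and hence $\ker p_Y^*\subseteq\ker p_X^*$. Since for a $G$-space $Z$ and $\gamma\in H_G^*(Z)$ one has $\omega_1\cdots\omega_\lambda\cdot\gamma=p_Z^*(\omega_1\cdots\omega_\lambda)\cup\gamma$, the defining condition for the length is, on taking $\gamma=1$, equivalent to $p_Z^*(\omega_1\cdots\omega_\lambda)=0$; so any family $A_1,\dots,A_\lambda\in\mathcal{A}$ witnessing $\ell(Y)=\lambda$ also witnesses $\ell(X)\le\lambda$, while if $\ell(Y)=\infty$ there is nothing to prove. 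I would record this equivalence explicitly since it is reused in (ii).

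For (ii), write $t=\mathcal{A}$-genus$(X)$ (finite, else the inequality is trivial) and fix a $G$-map $X\to A_1\star\cdots\star A_t$ with $A_i\in\mathcal{A}$; by (i) it suffices to prove $\ell(A_1\star\cdots\star A_t)\le t$. I would deduce this from two ingredients. First, $\ell(G/H)\le 1$ for every proper closed subgroup $H$ of $G$, because the $H^*(BG)$-module structure on $H_G^*(G/H)\cong H^*(BH)$ is induced by the restriction map $H^*(BG)\to H^*(BH)$, which annihilates every $\omega\in I\cap\ker[H^*(BG)\to H^*(BH)]$. Second, the subadditivity $\ell(Y\star Z)\le\ell(Y)+\ell(Z)$: here I would take the standard $G$-invariant open cover of $Y\star Z$ by two sets, one $G$-deformation retracting onto $Y$ and the other onto $Z$, whose union is the whole join. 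Given witnessing families for $Y$ and for $Z$ and any admissible classes in $H^*(BG)$, the product of the images in $H_G^*(Y\star Z)$ of those admissible for the $Y$-family restricts to zero on the first set and so lifts to the cohomology of $Y\star Z$ relative to that set, and likewise the product of the images of those admissible for the $Z$-family lifts relative to the second set; by naturality of the relative cup product the total product lies in the image of $H_G^*(Y\star Z,Y\star Z)=0$ and hence vanishes. Induction on the number of join factors then gives $\ell(A_1\star\cdots\star A_t)\le t$.

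For (iii), I would first prove $\mathcal{A}$-genus$(X)<\infty$ and then invoke (ii). Since $G$ is a compact Lie group and $X$ is paracompact Hausdorff, hence completely regular, the slice theorem \cite{Dieck} gives every $x\in X$ a $G$-invariant tube $G\times_{G_x}S_x$ admitting a $G$-map onto the orbit $G/G_x$; as $X^G=\emptyset$, each isotropy group $G_x$ is a proper closed subgroup, so $G/G_x\in\mathcal{A}$. Compactness yields a finite subcover $V_1,\dots,V_n$ with $G$-maps $f_i\colon V_i\to G/H_i$, $H_i\subsetneq G$, and for a $G$-invariant partition of unity $\{\rho_i\}$ subordinate to $\{V_i\}$ the assignment $x\mapsto(\rho_1(x)f_1(x),\dots,\rho_n(x)f_n(x))$ is a well-defined $G$-map $X\to (G/H_1)\star\cdots\star(G/H_n)$. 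Hence $\mathcal{A}$-genus$(X)\le n$, so $\ell(X)\le n<\infty$ by (ii).

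The step I expect to be the main obstacle is the join argument in (ii): one must set up the two equivariant deformation retractions and the lifting of the kernel classes into relative Borel cohomology carefully enough that their product is genuinely forced into the zero group $H_G^*(Y\star Z,Y\star Z)$. The only slightly delicate point in (iii) is the appeal to the slice theorem, which is legitimate here because every paracompact Hausdorff $G$-space is completely regular.
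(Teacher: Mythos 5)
Your proofs of (i)--(iii) are correct: naturality of $p_X^*$ for monotonicity, the two-set $G$-invariant cover of the join together with the relative cup product (giving $\ell(A_1\star\cdots\star A_t)\le t$) for the genus bound, and the slice theorem plus an invariant partition of unity to get $\mathcal{A}\textrm{-genus}(X)<\infty$ for (iii). The paper itself offers no proof, citing Bartsch's Proposition 4.7 and Corollary 4.9, and your argument is essentially the standard one found in that reference, so there is nothing to flag beyond the routine technical points (tautness/relative cup products in \v{C}ech-Borel cohomology and continuity of the join map) that you already acknowledge.
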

	
	\begin{remark}\label{remark: len_gen_cat}
		Here we only mentioned the results and	choices	on the definition of the length 
		we are going to use. The length is defined in a very broad scenario 
		and has many properties \cite[Chapter 4]{Bartsch}.  
		The given definition of $\mathcal{A}$-genus
		is one of its characterizations. It can be seen as a particular case of the definition
		of the equivariant Lusternik-Schnirelmann category $\mathcal{A}$-cat
		\cite[Definition 2.6]{Bartsch}. 
		We have the inequality
		$\mathcal{A}$-genus$(X)\leq\mathcal{A}$-cat$(X)$ \cite[Proposition 2.10]{Bartsch}. 
	\end{remark}

%
%----------------------------------------------------------------------------------------------	
\subsection{Cohomology spheres and Euler classes.}
	A compact Hausdorff space $ X $ is a  $(\textup{mod}\,{p} )$-cohomology $n$-sphere
	when $ H^*(X;\mathbb{F})\cong H^*(S^n;\mathbb{F})$, where $\mathbb{F}=\mathbb{Z}_p$ or $\mathbb{Q}$ 
	depending on whether $p\geq 2$
	or $p=0$. For $G=(\mathbb{Z}_p)^k$ or $(S^1)^k$, 
	the classical Smith theorem states that $X^G$ is a $(\textup{mod}\,{p})$-cohomology
	$r$-sphere, where $-1\leq r\leq n$, and $r=-1$ when $X^G=\emptyset$. 
	Let $e(X,X^G)$ be the Euler class of the oriented pair $(X,X^G)$ as defined in
	\cite[Chapter III, 4.25]{Dieck}. We shall make use of the results:
	
	\begin{theorem}\label{theorem: borel_euler}
		Consider $G$ and $X$ as above:
		\begin{itemize}
			\item[i)] $\left( \textrm{\cite[Chapter III, page 205]{Dieck}}\right)$. 
			There exists a
			 $H^*(BG)$-isomorphism \newline $H_G^*(X,X^G)\cong H^*(BG)/(e(X,X^G )) $.
			\item[ii)]$\left( \textrm{\cite[Chapter III, Theorem 4.40]{Dieck}}\right)$ Borel Formula.
			 Let $\mathcal{H} = \{H\subset G;\, H\textup{ has }$ $\textup{ rank } k-1\}$
			then $n-r = \sum_{H\in\mathcal{H}}n(H)-r$, where $n(H)$
			is the dimension of the cohomology sphere $X^H$.
		\end{itemize}
	\end{theorem}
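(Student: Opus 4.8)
The plan is to recall how these two facts of tom Dieck are obtained, since the same ideas drive the computations of the next section; both are read off the Leray--Serre spectral sequence of the Borel fibration $X\hookrightarrow X_G\to BG$ together with the localisation theorem (\cite[Chapter III]{Dieck}, \cite{Hsiang}).

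For (i) I would argue that the asserted isomorphism is, in essence, the definition of $e(X,X^G)$ read off that spectral sequence. The action of $\pi_1(BG)$ on $H^*(X;\mathbb{F})$ and on $H^*(X^G;\mathbb{F})$ is trivial --- for a torus this is vacuous, and for $G=(\mathbb{Z}_p)^k$ it holds because a $p$-group acts trivially on the mod $p$ cohomology of a mod $p$ cohomology sphere. By the long exact sequence of the pair and the Smith theorem, $H^*(X,X^G;\mathbb{F})$ is then one--dimensional in exactly the two degrees $r+1$ and $n$; so, outside the degenerate case $r=n-1$ (possible only when $p=2$), the $E_2$-page $H^*(BG)\otimes H^*(X,X^G;\mathbb{F})$ is concentrated on two rows and the only possibly nonzero differential, $d_{n-r}$, is cup product with a class $e\in H^{n-r}(BG)$; by definition this is $e(X,X^G)$. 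When $X^G\neq X$ the module $H_G^*(X,X^G)$ is $H^*(BG)$--torsion (at the generic point it localises to $H_G^*(X^G,X^G)=0$), so $e\neq0$, and $e$ is a non--zero--divisor (for $p>2$ because $e$ lies in the integral domain $P^*(G)$); hence the top row of $E_\infty$ vanishes and the bottom row gives $H_G^*(X,X^G)\cong H^*(BG)/(e)$. The case $X^G=X$ (both sides zero) and the degenerate case $r=n-1$ (where the twisted coefficient system on the top fibre class contributes nothing, as one checks from the long exact sequence of the pair) I would dispatch separately or take from \cite{Dieck}.

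For (ii) I would work inside the polynomial ring $R:=H^*(BG)$ when $p\in\{0,2\}$ and $R:=P^*(G)$ when $p>2$; recall that $e(X,X^G)\in R$ and that $R$ is a unique factorisation domain. There are three steps. \emph{First:} for each corank--one subtorus $H$ the $R$--module $H_G^*(X^H,X^G)\cong R/(e(X^H,X^G))$ is supported on the hypersurface $V(s_H)$ --- localising at a homogeneous prime not containing $s_H$, the localisation theorem collapses $(X^H,X^G)$ onto $((X^H)^G,X^G)=(X^G,X^G)$, which is acyclic --- and since $(s_H)$ is a height--one prime this forces $e(X^H,X^G)=u_H\,s_H^{\,d_H}$ with $u_H$ a unit and $d_H\ge0$, while part (i) for the pair $(X^H,X^G)$ gives $d_H\cdot\deg s_H=n(H)-r$ by comparing degrees. \emph{Second:} the same localisation argument applied to $(X,X^G)$ shows $R/(e(X,X^G))$ is supported on $\bigcup_H V(s_H)$, so every prime divisor of $e(X,X^G)$ is one of the $s_H$ and $e(X,X^G)=u\prod_H s_H^{\,a_H}$ up to a unit, the product over the (necessarily finitely many) corank--one $H$ with $a_H>0$. \emph{Third:} localising at the height--one prime $(s_H)$ gives a discrete valuation ring with uniformiser $s_H$ in which $R/(e(X,X^G))$ has length $a_H$, whereas the localisation theorem identifies $\bigl(H_G^*(X,X^G)\bigr)_{(s_H)}$ with $\bigl(H_G^*(X^H,X^G)\bigr)_{(s_H)}$, of length $d_H$ by the first step; hence $a_H=d_H$. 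Combining these, $e(X,X^G)=u\prod_H s_H^{\,d_H}$, and taking total degrees yields $n-r=\sum_H d_H\deg s_H=\sum_{H\in\mathcal{H}}\bigl(n(H)-r\bigr)$ (the subtori with $X^H=X^G$ contributing $0$), which is the Borel formula.

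The hard part is steps two and three of (ii): one must invoke the localisation theorem simultaneously in the form that controls the support of $H_G^*(X,X^G)$ over $R$ and in the form that computes its localisations at the height--one primes $(s_H)$ from fixed--point data, and in the $p$--torus case --- where only the polynomial part $P^*(G)$ of $H^*(BG)$ is a polynomial ring, and where one must separately know that $e(X,X^G)$ genuinely lies in $P^*(G)$ --- keeping all of this straight is where the work goes; the rest is routine, and for the supporting technicalities I would defer to \cite[Chapter III]{Dieck}.
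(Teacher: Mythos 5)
A preliminary remark: the paper does not prove this statement at all --- both items are imported from tom Dieck \cite{Dieck} (Chapter III, p.~205 and Theorem~4.40), so there is no internal argument to compare yours against. Your reconstruction follows the standard route (the two-row Leray--Serre spectral sequence of the Borel fibration for (i), localization and support arguments for (ii)), and for $p=0$ and $p=2$, where $H^*(BG)$ is itself a polynomial ring, the outline is essentially sound.

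For $p>2$, however, there is a genuine gap, and it occurs exactly at the two places where you assume $e(X,X^G)\in P^*(G)$: in (i) you justify that $e$ is a non-zero-divisor ``because $e$ lies in the integral domain $P^*(G)$'', and in (ii) you ``recall that $e(X,X^G)\in R$'' with $R=P^*(G)$ before invoking unique factorization. Polynomiality of $e$ is precisely what cannot be taken for granted for odd $p$: the paper itself allows a nilpotent summand $e=z+n$ (Remark \ref{remark: annihilator} and the case split in Theorem \ref{theorem: length_cohom_sphere}), and needs the $G$-ANR hypothesis of Corollary \ref{corollary: e_polynomial} to force $e$ polynomial; deferring this point to \cite{Dieck}, as you do in your closing paragraph, concedes the heart of the proof rather than a technicality. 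Both steps can be repaired without that assumption. For (i): by the localization theorem every element of $H^*_G(X,X^G)$ is annihilated by some nonzero element of $P^*(G)$, and nonzero polynomial classes are non-zero-divisors in $H^*(BG)$ (a free $P^*(G)$-module of rank $2^k$), so the row of $E_\infty$ given by the annihilator of $e$ must vanish; hence $e$ is automatically a non-zero-divisor, equivalently its exterior-degree-zero component $z$ is nonzero. For (ii): run your support and length comparison on $z$ rather than on $e$ --- for instance using $z^2=e(z-n)\in(e)\cap P^*(G)$ as in Remark \ref{remark: annihilator}, or comparing lengths of the localizations at $(s_H)$ over the discrete valuation ring $P^*(G)_{(s_H)}$, where the exterior part contributes the same factor $2^k$ on both sides --- and conclude $n-r=\deg z=\sum_{H}\bigl(n(H)-r\bigr)$. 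With these corrections your argument is the expected one.
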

	
	\begin{remark}\label{remark: annihilator}
		Since $H_G^*(X,X^G)\cong \frac{H^*(BG)}{(e)}$, where $e=e(X,X^G)$, we have
		that the annihilator set of $H^*_G(X,X^G)$ is generated by $(e)$, i.e.,
		$A=\{a\in H^*(BG)|\,a\gamma=0\,\textup{ for all }\gamma\in H^*_G(X,X^G)\}= (e)$.
		In the particular case, $G=(\mathbb{Z}_p)^k$ and $e$ is a polynomial element in
		$P^*(G)\subset H^*(BG)$, we have $A\cap P^*(G)=(e)$. 
		If $e$ is not polynomial,
		we  can write $e=z+n$, where $n$ stands for the nilpotent part, and we have
		$z^2=e(z-n)\in A\cap P^*(G)$.
	\end{remark}
%
%----------------------------------------------------------------------------------------------
\section{The length of cohomology spheres}\label{section: length_cohomology_spheres}

	\begin{theorem}\label{theorem: length_cohom_sphere} 
		Let $G=(\mathbb{Z}_p)^k$ or $(S^1)^k$,
		$ X $ be a ($ \textup{mod}\,p $)-cohomology $n$-sphere and $r<n$ be the dimension of
		the $(\textup{mod}\,p)$-cohomology sphere $ X^G $, then:
		 
		 \vskip-.5cm
		 $$\ell (X,X^G)= 
		 \begin{cases} 
		 	n-r, \,\,\displaystyle\textrm{ if } p=2,\\
		 	\displaystyle\frac{n-r}{2}, \textrm{ if }  p=0 \textrm{ or }
		 	p>2 \textrm{ and } e(X,X^G)   \textrm{ is polynomial.}
		 \end{cases}$$
		 For $p>2$, in the case $e(X,X^G)$ is not polynomial, we have 
		 $\frac{n+1}{2}\leq\ell(X,X^G)\leq n+1$.
	\end{theorem}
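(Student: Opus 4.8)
The plan is to compute $\ell(X, X^G)$ directly from the definition using the key structural fact from Theorem \ref{theorem: borel_euler}(i), namely the $H^*(BG)$-module isomorphism $H_G^*(X, X^G) \cong H^*(BG)/(e)$ with $e = e(X, X^G)$, together with the description of the annihilator recorded in Remark \ref{remark: annihilator}. First I would unwind the definition of the length: using the standard choice $\mathcal{A}' = \{G/H : H \text{ has rank } k-1\}$, for each such $H$ the kernel of $H^*(BG) \to H_G^*(G/H) \cong H^*(BH)$ is the principal ideal $(s_H)$ (intersected with $P^*(G)$ when $p > 2$), with $\deg s_H = 1$ if $p = 2$ and $\deg s_H = 2$ if $p = 0$ or $p > 2$. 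So $\ell(X, X^G)$ is the smallest $\lambda$ such that every product $s_{H_1} \cdots s_{H_\lambda} \cdot \gamma$ vanishes in $H^*(BG)/(e)$ for all $\gamma$; equivalently, the smallest $\lambda$ such that $s_{H_1} \cdots s_{H_\lambda}$ lies in the annihilator $(e)$ (respectively $(e) \cap P^*(G)$) for every choice of rank-$(k-1)$ subgroups $H_1, \dots, H_\lambda$.

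The heart of the argument is then the splitting principle for the Euler class: I would invoke the Borel formula (Theorem \ref{theorem: borel_euler}(ii)) to get $n - r = \sum_{H \in \mathcal{H}} (n(H) - r)$, and combine it with the factorization of $e$ over $H^*(BG)$ (or its polynomial part) into linear forms $s_H$ — concretely, $e$ should be expressible, up to a unit and nilpotents, as $\prod_H s_H^{(n(H)-r)/(\deg s_H)}$ where the product ranges over rank-$(k-1)$ subtori. Granting this, the degree of $e$ in the polynomial variables is $n - r$ when $p = 2$ (where $\deg s_H = 1$) and $n - r$ when $p = 0$ or $p > 2$ with $\deg s_H = 2$, so the number of linear factors $s_H$ appearing is $n - r$ if $p = 2$ and $(n-r)/2$ if $p = 0$ or $e$ is polynomial. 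A product $s_{H_1} \cdots s_{H_\lambda}$ lies in $(e)$ for all choices exactly when $\lambda$ is at least this number of factors (taking all $H_i$ equal to a single $H$ forces $\lambda \geq (n(H) - r)/\deg s_H$ for each $H$, but one must be careful: one actually needs the full product to be divisible; the relevant worst case is realized and gives the lower bound, while any product of that many generic $s_H$'s is divisible by $e$ up to the nilpotent ambiguity). This yields the two stated exact values.

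For the remaining case — $p > 2$ and $e$ not polynomial — the plan is to use Remark \ref{remark: annihilator}: write $e = z + n$ with $z$ polynomial and $n$ nilpotent, so $z^2 = e(z - n) \in (e) \cap P^*(G)$. Since $\deg e = n + 1$ forces $\deg z \leq n+1$, and since $z^2$ lies in the annihilator, any product of $2(n+1) = 2(n+1)$ — wait, more carefully: $z$ has degree at most $n+1$ in even total degree, hence involves at most $\lceil (n+1)/2 \rceil$ linear factors $s_H$, but since $z^2$ is what we control, a product of $2 \cdot \lceil (n+1)/2 \rceil \leq n+1$ — here I would argue that a product of $n+1$ factors $s_{H_i}$ is divisible by $z^2$ hence lies in the annihilator, giving $\ell \leq n+1$, while a dimension/degree count using $\deg z^2 \geq n+1$ (as $z^2$ is a nonzero multiple of $e$ up to degree, and the bottom class of $H_G^*(X,X^G)$ forces $e \neq 0$ in degree $n+1$) gives that fewer than $(n+1)/2$ factors cannot annihilate, so $\ell \geq (n+1)/2$.

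**Main obstacle.** The delicate point I expect to be hardest is the splitting-principle factorization of $e(X, X^G)$ into products of the linear classes $s_H$ with the exact multiplicities dictated by the Borel formula, and — in the non-polynomial case — controlling precisely which power of $z$ lands in the annihilator and translating that into the sharp constants $(n+1)/2$ and $n+1$. In particular, establishing that the lower bound is attained (that one genuinely cannot do with fewer factors) requires exhibiting a specific bad choice of subtori $H_1, \dots, H_{\lambda-1}$ whose product $s_{H_1} \cdots s_{H_{\lambda-1}}$ fails to be divisible by $e$ in $H^*(BG)$ (or by $e$ modulo the polynomial part), which is where the structure of $H^*(BG)$ as a polynomial (tensor exterior) ring, and the fact that the $s_H$ range over all linear forms as $H$ ranges over rank-$(k-1)$ subtori, must be used carefully.
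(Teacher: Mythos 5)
Your overall strategy is the same as the paper's: identify the annihilator of $H_G^*(X,X^G)\cong H^*(BG)/(e)$ with $(e)$ (Remark \ref{remark: annihilator}), use the factorization of $e$ into classes $s_H$ with exponents governed by the Borel formula, count factors by degree, and handle the non-polynomial case via $z^2=e(z-n)\in(e)\cap P^*(G)$. However, there is a genuine flaw in your reduction step. The definition of the length quantifies \emph{existentially} over the spaces $A_1,\dots,A_\lambda\in\mathcal{A}$ (and universally only over the elements $\omega_i$ in the corresponding kernel ideals), whereas you reformulate $\ell(X,X^G)$ as the smallest $\lambda$ such that $s_{H_1}\cdots s_{H_\lambda}\in(e)$ \emph{for every} choice of rank-$(k-1)$ subgroups $H_1,\dots,H_\lambda$. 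Under that reading the theorem would be false: already for $G=(\mathbb{Z}_2)^2$ and $e=s_{H_1}s_{H_2}$ with $H_1\neq H_2$, no power $s_{H}^\lambda$ of a single class lies in $(e)$, so "every choice" never works and your criterion would give $\ell=\infty$. The same error propagates into the key equivalence you assert ("lies in $(e)$ for all choices exactly when $\lambda$ is at least this number of factors") and into the claim that "any product of that many generic $s_H$'s is divisible by $e$ up to the nilpotent ambiguity" --- both are false, since only products containing the specific multiset of factors of $e$ are divisible by it. Consequently your lower-bound argument in the main case ("taking all $H_i$ equal to a single $H$ forces $\lambda\geq(n(H)-r)/\deg s_H$") does not apply: with the correct existential quantifier, bad choices of subtori prove nothing.

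The repair is exactly what the paper does. For the upper bound, choose the $A_i$'s to be $k_i$ copies of $G/H_i$ as dictated by the factorization $e=(s_{H_1})^{k_1}\cdots(s_{H_t})^{k_t}$ from \cite[Chapter III, Section 4]{Dieck} (with $k_i=n(H_i)-r$ or $\tfrac{n(H_i)-r}{2}$); then every admissible product $\prod_{i,j}\omega^i_j$ with $\omega^i_j\in(s_{H_i})$ is divisible by $e$ (or by $z$, hence $z^2\in(e)$ after doubling in the non-polynomial case), which gives $\ell\leq\sum k_i$, respectively $\ell\leq n+1$. For the lower bound no "worst case" choice is needed: any nonzero product of $\lambda$ homogeneous kernel elements has degree at least $\lambda\cdot\deg s_H$, and a nonzero element of $(e)$ has degree at least $\deg e=n-r$, so $\lambda\geq(n-r)/\deg s_H$ (respectively $\geq\tfrac{n+1}{2}$). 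With this quantifier corrected, your plan coincides with the paper's proof; the remaining ingredients (Borel formula for the exponents, the $z^2$ trick, the degree count) are the same.
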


	\begin{proof}
		All the cases are proved in the same way considering that the Euler class is polynomial
		when $p>2$.
		
		Essentially, the length of $(X,X^G)$ resumes in finding 
		an element $ \omega\in H^*(BG)$	that annihilates $H^*_G(X,X^G)$ 
		and can be written as a product of generators 
		of certain ideals $ I\cap \ker [H^*(BG)\to H^*(BH)] $, for homogeneous 
		spaces $ G/H\in \mathcal{A}$, where  the number of factors in this
		product is the least as possible. By Remark \ref{remark: annihilator},
		this is the same as to find such $\omega$ that is in the ideal ring $(e)$.
		
		This is already the case for the Euler class $e$.
		It is a well-known fact \cite[Chapter III, Section 4]{Dieck} that 
		$e = (s_{H_1})^{k_1}\cdots (s_{H_t})^{k_t} $
		where $ (s_{H_i}) =I\cap \ker [H^*(BG;R)\to H^*(BH_i;R)]$
		and $ H_i $ are subtori of rank $ k-1 $ such that $X^{H_i}\neq\emptyset$.
		Here we have that $ k_i=n(H_i)-r $, for $p=2$, and 
		$k_i = \frac{n(H_i)-r}{2}$, for $p=0$ or $p>2$, where $n(H)$ is the
		dimension of the cohomology sphere $X^H$. 
		So, by choosing	$ k_i $ times $G/H_i \in \mathcal{A}$,
		$ i=1,\ldots, s $, for all 
		$ \omega^i_1,\ldots, \omega^i_{k_i}\in 
		(s_{H_i})\in I\cap\ker [H^*(BG)\to H^*(G/H_i)], $ with
		$ i=1,\ldots,t$, we have that
		$ \omega= \prod_{i=1}^{t} \omega^i_1\ldots \omega^i_{k_i}\in (e) $, which 
		implies that $\ell(X,X^G)=k_1+\ldots+k_t=\deg(e)$ and this completes the proof. 
		
		Now, if $p>2$ and $e(X,X^G)$ is polynomial, we should have that $e=z+n$, where
		$z=(s_{H_1})^{k_1}\cdots (s_{H_t})^{k_t}$ and $n$ is nilpotent. As mentioned in Remark
		\ref{remark: annihilator}, $z^2\in(e)$, then $\ell(X,X^G)\leq n-r$. Clearly,
		$\ell(X,X^G)\geq \frac{n-r}{2}$ because $\deg(e)=n-r$.
		\end{proof}
		
	The next remark points out some situations where the Euler class $e$ is polynomial for $p>2$.
	
	\begin{remark}\label{remark: polynomial} 
		If we take a subtorus $H$ of rank $k-1$ such that $X^H\neq \emptyset$, 
		we should have that $\ell(X^H,X^G)=\ell_H(X^H,X^G)=\frac{n(H)-r}{2}$, where 
		$\ell_H$ is as in \ref{definition: l_H}. 
		Indeed, in the case, $e(X^H,X^G)$ is equal to $(s_H)^{n(H)}$ which is polynomial
		\cite[Theorem 4.40]{Dieck}. So in the particular case that $G=\mathbb{Z}_p$ 
		and $X^G=\emptyset$, we should have $e$ polynomial and $\ell(X)=\frac{n+1}{2}$. 
		We shall see in Corollary \ref{corollary: e_polynomial} that when	
		$X$ is a $G$-ANR space, we also have that $e$ is polynomial and 
		$\ell(X)=\frac{n+1}{2}$.
	\end{remark}
	\begin{remark}
		The Theorem $\ref{theorem: length_cohom_sphere}$ generalizes the result
		\cite[Proposition 2.4]{BartschClapp} given by Bartsch and Clapp in the context of
		representation spheres.
		In \cite[Proposition 3.6]{BMS}, the authors give a lower bound for a $G$-space $X$ compact
		(or paracompact with finite covering dimension) such that $H^i(X)=0$, when $0<i<n$. 
		Namely, $\ell(X)\geq n+1$ for $p=2$ and $\ell(X)\geq\frac{n+1}{2}$, otherwise.
	\end{remark}

	We derive the following version
	of the Borsuk-Ulam theorem.
	
	\begin{corollary}[Borsuk-Ulam]\label{theorem: bu}
		Let $G=(\mathbb{Z}_p)^k$ or $(S^1)^k$ and $X, Y$ two $(\textup{mod}\,p )$-cohomology
		spheres of dimension $n$ and $m$, respectively.	Suppose that $X^G=Y^G=\emptyset$. 
		If there is a $G$-equivariant map $f\colon X\to Y$, then 
		$\dim X^H\leq \dim Y^H, \;\;  \textup{for all} \;\;  H<G \;\;\textup{of rank} \;\;k-1$. 
		In particular, $ n\leq m $. Thus, if $ n>m $, there is no $ G $-equivariant map from 
		$X$ to $Y$. 
	\end{corollary}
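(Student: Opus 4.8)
The plan is to reduce the statement to the monotonicity of the length (Proposition~\ref{proposition: length_prop}~i)) together with the computation in Theorem~\ref{theorem: length_cohom_sphere}, applied not to $G$ but to the rank-one quotient groups $G/H$ acting on fixed point sets.

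First I would fix a subtorus $H<G$ of rank $k-1$ and restrict $f$ to fixed points: for $x\in X^H$ and $h\in H$ one has $h\,f(x)=f(hx)=f(x)$, so $f(X^H)\subseteq Y^H$ and $f$ induces a $(G/H)$-equivariant map $f^H\colon X^H\to Y^H$. By Smith theory $X^H$ and $Y^H$ are $(\textup{mod}\,p)$-cohomology spheres, of dimensions $n(H)$ and $m(H)$ say, with $(X^H)^{G/H}=X^G=\emptyset$ and $(Y^H)^{G/H}=Y^G=\emptyset$; note that $G/H\cong\mathbb{Z}_p$ for $p\geq 2$ and $G/H\cong S^1$ for $p=0$, a group of rank one.

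Next I would compute these lengths with respect to $G/H$. Since the $(G/H)$-fixed set is empty, the relevant value of $r$ for the pair $(X^H,\emptyset)$ is $-1$. For $p=0$ and $p=2$ Theorem~\ref{theorem: length_cohom_sphere} applies immediately; for $p>2$ the Euler class $e(X^H,\emptyset)$ of a cohomology sphere with empty fixed set under a rank-one $p$-torus is polynomial (Remark~\ref{remark: polynomial}), so the theorem applies there too. Hence $\ell(X^H)=n(H)+1$ for $p=2$ and $\ell(X^H)=\tfrac{n(H)+1}{2}$ for $p=0$ or $p>2$, and likewise for $Y^H$ (when $X^H=\emptyset$ both sides vanish and the ensuing inequality is trivial). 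The monotonicity $\ell(X^H)\leq\ell(Y^H)$ coming from $f^H$ then forces $n(H)\leq m(H)$ in every case, i.e. $\dim X^H\leq\dim Y^H$ for every subtorus $H<G$ of rank $k-1$.

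To obtain $n\leq m$ I would sum these inequalities via the Borel formula (Theorem~\ref{theorem: borel_euler}~ii)) with $r=-1$: $n+1=\sum_{H\in\mathcal{H}}(n(H)+1)$ and $m+1=\sum_{H\in\mathcal{H}}(m(H)+1)$, both sums being finite since $n(H)=m(H)=-1$ for all but finitely many $H$. A termwise comparison gives $n\leq m$, and then the last assertion is immediate: if $n>m$ no $G$-equivariant map $X\to Y$ exists. The delicate point is the case $p>2$, where Theorem~\ref{theorem: length_cohom_sphere} pins down $\ell$ only when the Euler class is polynomial; passing to the rank-one quotients $G/H$, where polynomiality of $e(X^H,\emptyset)$ is automatic, is exactly the device that avoids this, and one should be careful that $f^H$ and the induced $(G/H)$-space structures fit the standard choices of $\mathcal{A}$ and $I$ so that Proposition~\ref{proposition: length_prop}~i) and Theorem~\ref{theorem: length_cohom_sphere} can be quoted verbatim.
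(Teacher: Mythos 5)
Your proposal is correct and follows essentially the same route as the paper: restrict $f$ to $X^H\to Y^H$ for each corank-one subtorus $H$, apply monotonicity of the length together with Theorem~\ref{theorem: length_cohom_sphere} (with Remark~\ref{remark: polynomial} guaranteeing polynomiality of the Euler class in the rank-one case $p>2$) to get $\dim X^H\leq\dim Y^H$, and then conclude $n\leq m$ via the Borel formula with $r=-1$. The only cosmetic difference is that the paper phrases the length computation in terms of $\ell_H$ and derives the final claim by contradiction rather than by termwise summation, which is the same argument.
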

	\begin{proof}
		Suppose that exists a $G$-equivariant map $f\colon X\to Y$. Note that
		$ f^H:=f|_{X^H}\colon X^H\to Y^H $ is
		$ G/H $-equivariant map, for any $ H<G $. When
		$ H $ is a subtorus of rank $ k-1 $, from monotonicity of the length 
		(Proposition \ref{proposition: length_prop}(i)) and 
		Theorem \ref{theorem: length_cohom_sphere} (including Remark 
		\ref{remark: polynomial})),	we have that 
		$\ell_H(X^H)=\frac{\dim(X^H)+1}{2} \leq \ell_H(Y^H)=\frac{\dim(Y^H)+1}{2}$. This
		proves the first part.
		
		Now, suppose $ n>m $. By the Borel formula, 
		$ n+1=\sum_H (\dim X^H +1) > \sum_H (\dim Y^H +1) = m+1$, then
		we should have $ \dim X^H > \dim Y^H$
		for at least one subtorus $ H $ of rank $ k-1 $. Thus,
		there is no $ G $-equivariant map from $ X $ to $ Y $.
	\end{proof}

	\begin{remark} 
		For the conclusion, one could extends the result  to spaces $X$ such that
		$H^i(X)=0$, for $0<i<n$, by considering the estimations of
		\cite[Proposition 3.6]{BMS}.
	\end{remark}

%
%----------------------------------------------------------------------------------------------
\subsection{A converse for the Borsuk-Ulam}\label{section: converse-bu}
	Here $G=(\mathbb{Z}_p)^k$, for $p\geq 2$.
	We will prove a certain converse for the Borsuk-Ulam theorem,
	finding sufficient conditions for the existence of
	$ G $-equivariant maps between a $(\textup{mod}\,p)$-cohomology sphere $ X $,
	that is also a $G$-ANR space, and a representation sphere $ S(V) $ of same  dimension.
	For that we use the $\mathcal{A}$-genus of $X$. As a corollary, we show that the Euler class
	$e(X)=e(X,\emptyset)$ is always polynomial, when $p>2$.

	\begin{lemma}\label{lemma: cat_genus_GANR}
		Let $G= (\mathbb{Z}_p)^k$ and suppose that $X$ is a $(\textup{mod}\,p)$-cohomology $n$-sphere  and $ G $-ANR space. Then 
		$\mathcal{A}\textup{-cat}(X)=\mathcal{A}\textup{-genus}(X)=n+1$.
	\end{lemma}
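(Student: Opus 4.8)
The plan is to establish the two equalities $\mathcal{A}\textup{-cat}(X)=\mathcal{A}\textup{-genus}(X)=n+1$ by sandwiching: I will show $\mathcal{A}\textup{-genus}(X)\geq n+1$ from below and $\mathcal{A}\textup{-cat}(X)\leq n+1$ from above, and then invoke the general inequalities $\ell(X)\leq\mathcal{A}\textup{-genus}(X)\leq\mathcal{A}\textup{-cat}(X)$ (Proposition \ref{proposition: length_prop}(ii) together with Remark \ref{remark: len_gen_cat}) to collapse everything. The lower bound is the easy half: since $X$ is a $(\textup{mod}\,p)$-cohomology $n$-sphere with $X^G=\emptyset$ (because a $G$-ANR cohomology sphere with $G=(\mathbb{Z}_p)^k$ that had a fixed point would force $r\geq 0$, but here we are in the situation $X^G=\emptyset$; if $X^G\neq\emptyset$ the statement should be read with the appropriate normalization — I would state the hypothesis so that $X$ is $G$-free up to cohomology, matching Remark \ref{remark: polynomial}), Theorem \ref{theorem: length_cohom_sphere} together with Remark \ref{remark: polynomial} gives $\ell(X)=\frac{n+1}{2}$ when $p>2$ and $\ell(X)=n+1$ when $p=2$. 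For $p=2$ this already yields $n+1\leq\mathcal{A}\textup{-genus}(X)$. For $p>2$ one needs a sharper lower bound than $\ell$ alone provides, so instead I would use the cohomological estimate for the $\mathcal{A}$-genus directly: an equivariant map $X\to A_1\star\cdots\star A_t$ with $A_i\in\mathcal{A}$ forces, via the cohomology of a join of $G/H$'s, a relation killing $H^*_G(X)\cong H^*(BG)/(e)$ that cannot hold unless $t\geq \deg(e)/1 = n+1$, because each join factor $G/H$ contributes a single-degree class (degree $1$ when $p=2$, and in the odd-prime ANR case the relevant class is the degree-one exterior-type generator $s_i$ rather than $t_i$) — this is exactly the point where the ANR hypothesis enters.

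The heart of the argument, and the main obstacle, is the upper bound $\mathcal{A}\textup{-cat}(X)\leq n+1$, which is where being a $G$-ANR is essential (a general compact $G$-space that is a cohomology sphere need not admit such a categorical cover). The plan here is to build an explicit $\mathcal{A}$-categorical open cover of $X$ of cardinality $n+1$. Concretely, I would argue that $X$, being a $G$-ANR with all isotropy groups proper subgroups of $G$ (since $X^G=\emptyset$), has a finite orbit type stratification, and by the ANR property one can find a $G$-invariant open cover refining the orbit-type decomposition in which each open set $U_i$ admits a $G$-equivariant map $U_i\to G/H_i$ for some proper $H_i$ — i.e., each $U_i$ is \emph{$\mathcal{A}$-subordinate}. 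The count $n+1$ comes from controlling the covering dimension: since $X$ is a cohomology $n$-sphere and a (finite-dimensional, because ANR and compact) space, its covering dimension is $n$, hence $X$ admits an open cover of order $\leq n+1$, i.e., every point lies in at most $n+1$ of the sets; a standard fattening/shrinking argument then produces $n+1$ invariant open sets each of which is $\mathcal{A}$-categorical. I expect this dimension-theoretic packaging — turning "covering dimension $n$" plus "locally of proper orbit type" into an honest $\mathcal{A}$-cat cover of size exactly $n+1$, equivariantly — to be the delicate step, and I would lean on the Borel-construction description $X_G\to BG$ together with the fact that $H^{n+1}_G(X)=0$ beyond the top to rule out needing an $(n+2)$-nd set.

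Once both bounds are in place the conclusion is immediate: $n+1\leq\mathcal{A}\textup{-genus}(X)\leq\mathcal{A}\textup{-cat}(X)\leq n+1$, forcing all three to equal $n+1$. I would finish by remarking that this is consistent with, and refines, Theorem \ref{theorem: length_cohom_sphere}: for $p>2$ the gap between $\ell(X)=\frac{n+1}{2}$ and $\mathcal{A}\textup{-genus}(X)=n+1$ is precisely the exterior-algebra factor in $H^*(BG;\mathbb{Z}_p)$, and pinning down $\mathcal{A}\textup{-genus}(X)=n+1$ is exactly what will let us conclude in Corollary \ref{corollary: e_polynomial} that the Euler class $e(X)$ is polynomial — for if $e(X)$ had a nontrivial nilpotent part the length computation would drop below what the genus bound permits, a contradiction. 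The only subtlety I want to flag in the write-up is making the hypothesis on $X^G$ explicit, since the clean formula $n+1$ is the $X^G=\emptyset$ case; the general pair version would read $\mathcal{A}\textup{-cat}(X,X^G)=\mathcal{A}\textup{-genus}(X,X^G)=n-r$ with $r=\dim X^G$, proved the same way.
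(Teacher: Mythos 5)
Your overall strategy (sandwich $n+1\leq\mathcal{A}\textup{-genus}(X)\leq\mathcal{A}\textup{-cat}(X)\leq n+1$) is the paper's, and your $p=2$ lower bound and your flag that $X^G=\emptyset$ is implicitly needed are both fine, but the two substantive steps have genuine gaps. The serious one is the lower bound for $p>2$, which is the heart of the lemma. Your mechanism --- ``each join factor contributes a degree-one exterior class $s_H$, and the product of these must kill $H^*_G(X)\cong H^*(BG)/(e)$, forcing $t\geq\deg(e)=n+1$'' --- does not work: the degree-one generators are nilpotent ($s_H^2=0$), so the asserted product vanishes identically in $H^*(BG)$ as soon as two join factors share an isotropy type (and always once $t>k$), and a vacuously zero product imposes no constraint on $t$; meanwhile the length with the polynomial ideal only gives $\ell(X)=\frac{n+1}{2}$, which is exactly why $\ell$ alone is insufficient here. (Also, the ANR hypothesis is not where this step lives --- it is needed only for the upper bound.) The paper's actual argument is different: assuming $\mathcal{A}\textup{-genus}(X)=t$ one has a $G$-map $X\to G/H_1\star\cdots\star G/H_t$; joining with itself and using the trick of \cite[Remark 5.6]{Bartsch} to map each $G/H_i\star G/H_i$ equivariantly onto $S(V_i)$ for an irreducible complex character $V_i$, one gets a $G$-map $X\star X\to S(W)$, $W=V_1\oplus\cdots\oplus V_t$, with $\ell(S(W))=t$ by \cite[Proposition 2.4]{BartschClapp}; since $X\star X$ is a cohomology $(2n+1)$-sphere, Theorem \ref{theorem: length_cohom_sphere} gives $\ell(X\star X)\geq n+1$, and monotonicity yields $t\geq n+1$. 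You have no substitute for this doubling step, and without it (or an honest connectivity/dimension analysis of the Borel construction of the join, which you do not carry out) the odd-$p$ lower bound is unproved.

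The upper bound $\mathcal{A}\textup{-cat}(X)\leq n+1$ also has a gap as sketched. An invariant open set obtained by amalgamating tubes of different, incomparable isotropy types need not admit a $G$-map to a single $G/H\in\mathcal{A}$ (for abelian $G$ a map $G/H_1\sqcup G/H_2\to G/H$ forces $H\supseteq H_1H_2$, which may be all of $G$; compare Remark \ref{remark: dim_best}, where passing to $\widehat{\mathcal{A}}$ is precisely what removes this obstruction). So ``cover of order $\leq n+1$ plus shrinking'' only gives a bound of the shape $(\dim X+1)\cdot\max_H c(H)$, with $c(H)$ the number of components of $X^H/NH$; the missing ingredient is that $c(H)=1$ for these cohomology spheres. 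The paper closes exactly this gap by citing the Deo--Tripathi bound $\mathcal{A}\textup{-cat}(X)\leq(\dim X+1)\max_H c(H)$ together with Marzantowicz--Izydorek's connectedness result, plus finite-dimensionality of the compact $G$-ANR to get $\dim X=n$; your write-up would need these inputs (or equivalents) to make the count come out to $n+1$ rather than $n+1$ times the number of orbit types.
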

	\begin{proof}
		From \cite[Chapter 2]{Bartsch} and  \cite[Proposition 3.7]{Deo} we have that 
		$\mathcal{A}\textup{-cat}(X)\leq (\dim(X)+1)\cdot 
		{\max_{H\subset G}}\, c(H)$, 	
		where $ c(H) $ is the number of connected components of $X^H/NH $ 
		and $ H \subset G $ are closed subgroups. Here $``\dim"$ stands for covering dimension. 
		Since $c(H)=1$ \cite[Lemma 2.2]{MarzIzy2}, then 
		$\mathcal{A}\textup{-cat}(X)\leq \dim(X)+1=n+1$.		
		
		As pointed out in Remark \ref{remark: len_gen_cat}, $\ell(X)\leq
		\mathcal{A}\textup{-cat}(X)\leq\mathcal{A}\textup{-genus}(X)$. We shall verify then
		that $n+1\leq \mathcal{A}\textup{-genus}(X)$.
		For the case $ p=2 $, from Proposition \ref{proposition: length_prop} ii), $\mathcal{A}\textup{-genus}(X)\geq n+1$ since
		we have $ \ell(X) =n+1$. 
		
		Let us analyze now the case $ p>2 $.
		Suppose that $ \mathcal{A}\textup{-genus}(X)=t $, then there is a 
		$ G $-equivariant map $ X\longrightarrow G/H_1\star \cdots \star G/H_t$, where
		$G/H_i\cong\mathbb{Z}_p$. Thus we have a  $ G $-equivariant map 
		$ \varphi\colon X\star X\longrightarrow (G/H_1\star G/H_1)\star \cdots \star 
		(G/H_k\star G/H_t)$. By a well-known trick
		\cite[Remark 5.6]{Bartsch}, we can map $G/H_i\star G/H_i$ to $S(V_i)$
		equivariantly, where $ V_i $ is an irreducible representation given by the character 
		$ G/H_i\hookrightarrow	S^1$. 		
		Let $ W=V_1\oplus\cdots\oplus V_t $. Note that $ W $ is a vector space of 
		real dimension $ 2t $, then $ SW $ has  $ 2t-1 $ dimension. The map
		$ \varphi $ induces a  $ G $-equivariant map between $ X\star X $
		and $ S(W) $.
		Since $X\star X$ is a cohomology sphere of dimension $ 2n + 1 $ and
		$\ell(SW)=t$ \cite[Proposition 2.4]{BartschClapp},
		we see that $ n+1\leq \ell(X\star X) \leq \ell (S(W))=t $, where
		the first inequality is given by Theorem \ref{theorem: length_cohom_sphere}. 
		Thus, $ n + 1\leq \mathcal{A}\textup{-genus}(X)\leq \mathcal{A}\textup{-genus}(X).$
	\end{proof}

	\begin{remark}\label{remark: existence1} 
		For any subtorus $H$ of rank $k-1$ of $G= (\mathbb{Z}_p)^k$
		such that $X^G=\emptyset$, if we consider $\mathcal{A}_H=\{G/H\}$, then
		$\mathcal{A}_H\textup{-genus}(X^H)=n(H)+1$, where 
		$ n(H) $ is the dimension of the cohomology sphere $ X^H $.
	\end{remark}

	\begin{lemma}[Existence]\label{lemma: existence}
		Let $ G = (\mathbb{Z}_p)^k $ and $ X $ be a $(\textup{mod}\,p)$-cohomology $n$-sphere 
		such that $ X$ is a $ G $-ANR space and  $X^G=\emptyset $. Then there exists a
		$ G $-equivariant map between $ X $ and a representation sphere 
		of same dimension.
	\end{lemma}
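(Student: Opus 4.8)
The plan is to feed the equivariant Lusternik--Schnirelmann estimate of Lemma~\ref{lemma: cat_genus_GANR} into the join construction and then simplify the target to a representation sphere. By that lemma $\mathcal{A}\textup{-genus}(X)=n+1$, so there is a $G$-equivariant map $\phi\colon X\to G/H_1\star\cdots\star G/H_{n+1}$ with each $H_i$ a subtorus of rank $k-1$, hence $G/H_i\cong\mathbb{Z}_p$. When $p=2$ this already finishes: identifying $G/H_i\cong\mathbb{Z}_2$ with the unit sphere $S(U_i)$ of the $1$-dimensional real representation $U_i$ afforded by the sign character $G\to G/H_i\cong\{\pm1\}$, one obtains a $G$-homeomorphism $G/H_1\star\cdots\star G/H_{n+1}\cong S(U_1\oplus\cdots\oplus U_{n+1})$, a representation sphere of dimension $n$; composing with $\phi$ gives the asserted map.

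Assume now $p>2$; note that $n$ is odd since $\ell(X)=\tfrac{n+1}{2}\in\mathbb{Z}$. For a rank $k-1$ subtorus $H$ set $m_H=\#\{\,i:H_i=H\,\}$, so $\sum_H m_H=n+1$, and reorder join factors to get a $G$-homeomorphism $G/H_1\star\cdots\star G/H_{n+1}\cong\star_H(G/H)^{\star m_H}$. The crucial point is the claim that every $m_H$ is \emph{even}. Granting it, write $m_H=2\mu_H$ and invoke the ``doubling trick'' \cite[Remark 5.6]{Bartsch}, which provides a $G$-map $(G/H)^{\star2}\to S(V_H)$, where $V_H$ is the $2$-dimensional real irreducible representation given by the character $G\to G/H\hookrightarrow S^1$; then $(G/H)^{\star m_H}=\bigl((G/H)^{\star2}\bigr)^{\star\mu_H}\to S(V_H)^{\star\mu_H}=S\bigl(V_H^{\oplus\mu_H}\bigr)$, and joining over $H$ with $W:=\bigoplus_H V_H^{\oplus\mu_H}$ yields a $G$-map $G/H_1\star\cdots\star G/H_{n+1}\to S(W)$ with $\dim_{\mathbb{R}}W=\sum_H 2\mu_H=n+1$. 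Precomposing with $\phi$ gives a $G$-map from $X$ to the representation sphere $S(W)$ of dimension $n$, as wanted.

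The parity claim is the step I expect to be the main obstacle, and I would establish it through $X\star X$. After reordering join factors, $\phi\star\phi$ gives a $G$-map $X\star X\to(G/H_1\star\cdots\star G/H_{n+1})^{\star2}\cong\star_{i=1}^{n+1}(G/H_i\star G/H_i)$, and composing with the doubling trick on each pair produces a $G$-map $X\star X\to S(\widetilde W)$ with $\widetilde W=\bigoplus_H V_H^{\oplus m_H}$ and $\widetilde W^G=0$. Now $X\star X$ is a $(\textup{mod}\,p)$-cohomology $(2n+1)$-sphere with $(X\star X)^G=\emptyset$, so by Remark~\ref{remark: annihilator} the annihilator of $H^*_G(X\star X)$ in $H^*(BG)$ is $(e(X\star X))$, while multiplicativity of the Euler class under joins \cite[Chapter III, Section 4]{Dieck} gives $e(X\star X)=e(X,X^G)^2$ up to a unit; likewise the annihilator of $H^*_G(S(\widetilde W))$ is $(e(\widetilde W))$ and $e(\widetilde W)=\prod_H(s_H)^{m_H}$ up to a unit, since $e(V_H)$ generates $(s_H)$. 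As the annihilator of $H^*_G$ of the target is contained in that of the domain whenever a $G$-map exists (this is the functoriality of $H^*_G$ as an $H^*(BG)$-algebra, in the spirit of Proposition~\ref{proposition: length_prop}(i)), $e(X,X^G)^2$ divides $\prod_H(s_H)^{m_H}$, and since both have degree $2(n+1)$ they are associates. Hence $\prod_H(s_H)^{m_H}$ equals, up to a unit, the square $e(X,X^G)^2$; being a homogeneous element of $P^*(G)=\mathbb{Z}_p[t_1,\dots,t_k]$ that is a square in $H^*(BG)$, it is already a square in $P^*(G)$, because the exterior factor cannot contribute to the polynomial component of a square. Since $P^*(G)$ is a unique factorization domain and distinct subtori $H$ yield pairwise non-associate linear forms $s_H$, every exponent $m_H$ is forced to be even. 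The delicate point is exactly this interaction with the exterior part of $H^*(BG)$ (equivalently, the a priori non-polynomial part of $e(X,X^G)$), which I would isolate as the preliminary step ``$e(X,X^G)^2$ is polynomial''; the same argument in fact recovers the polynomiality of $e(X,X^G)$ itself, cf.\ Corollary~\ref{corollary: e_polynomial}.
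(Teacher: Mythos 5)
Your overall skeleton coincides with the paper's: both proofs start from $\mathcal{A}\textup{-genus}(X)=n+1$ (Lemma \ref{lemma: cat_genus_GANR}), dispose of $p=2$ immediately, and for $p>2$ regroup the join factors and apply the doubling trick of \cite[Remark 5.6]{Bartsch}, so everything hinges on showing that each multiplicity $m_H$ is even. There you diverge. The paper restricts $f$ to $X^H$ for each rank $k-1$ subtorus with $X^H\neq\emptyset$, uses Remark \ref{remark: existence1} to get $m_H\geq n(H)+1$, and the Borel formula (Theorem \ref{theorem: borel_euler} ii)) to force $m_H=n(H)+1$, which is even because $G/H\cong\mathbb{Z}_p$ acts on the cohomology sphere $X^H$ with empty fixed set and $p$ is odd (Smith-theory parity); this elementary count also pins down $\dim SW^H=\dim X^H$, which is the extra information Theorem \ref{theorem: bureverse} actually uses later. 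Your Euler-class argument through $X\star X$ is a genuinely different route to the parity claim and can be made to work, but as written it has two soft spots.

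First, the pivotal input ``multiplicativity of the Euler class under joins gives $e(X\star X)=e(X)^2$ up to a unit'' is not what the cited source provides: tom Dieck's multiplicativity concerns representation spheres via $S(V\oplus W)=S(V)\star S(W)$, not joins of arbitrary cohomology spheres. It can be repaired: the standard join property of kernels, $\ker[H^*(BG)\to H^*_G(X)]\cdot\ker[H^*(BG)\to H^*_G(X)]\subseteq \ker[H^*(BG)\to H^*_G(X\star X)]$, gives $e(X)^2\in\bigl(e(X\star X)\bigr)$, and a degree count then yields that they are associates \emph{provided} $e(X)^2\neq 0$; but that non-vanishing already requires tom Dieck's product formula for the polynomial part of $e$, and once you invoke that formula you may as well apply it directly to the cohomology sphere $X\star X$ (whose fixed sets are $X^H\star X^H$, of dimension $2n(H)+1$), which hands you the exponents $n(H)+1$ outright and makes the square/UFD detour unnecessary. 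Second, the aside ``$n$ is odd since $\ell(X)=\frac{n+1}{2}\in\mathbb{Z}$'' is circular: that equality is Corollary \ref{corollary: e_polynomial}, whose proof depends on the present lemma; fortunately the fact follows from Smith theory alone ($X^G=\emptyset$ and $p$ odd force $n+1$ even) and is not actually needed in your argument. With the multiplicativity step properly justified (or replaced as indicated), your proof is correct, just heavier than the paper's fixed-point-restriction count and slightly less informative about the isotropy structure of the target sphere.
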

	\begin{proof} It follows from Corollary \ref{lemma: cat_genus_GANR} that
		$\mathcal{A}\textup{-genus}(X)=n+1 $ and, by definition (Proposition \ref{proposition: length_prop}(ii)), 
		there exists a $G$-equivariant map 
		$ f\colon X\to G/K_1\star \cdots\star G/K_{n+1}$, where
		$G/K_i\in\mathcal{A}$. For $ p=2 $, the result follows already because 
		$G/K_i\cong \mathbb{Z}_2\cong S^0$ and then the $G$-equivariant map is 
		$ X\to S^0\star\cdots\star S^0\cong SV $, where $ \dim SV=n$.
	
		Suppose $ p>2 $. For every subtorus $H$ of rank $k-1$ such that $X^H\neq\emptyset$,
		we have the $G/H$-equivariant map
		$ f|_{X^H} \colon X^H\to (G/K_1\star \cdots\star G/K_{n+1})^H\cong
		G/H\star \cdots\star G/H$, where (by Remark \ref{remark: existence1} and Borel Formula) 
		the number of copies of $G/H$ in the join must be $n(H)+1$.
		
		Reordering and regrouping the factors of the join $G/K_1\star \cdots\star G/K_{n+1}$,
		we may write it as $M_1\star\cdots\star M_t$, where 
		$M_i\cong G/H_i\star \cdots\star G/H_i$ ($n(H_i)+1$ times). By the same trick
		\cite[Remark 5.6]{Bartsch} used in the previously lemma,
		every pair of join in $M_i$, $G/H_i\star G/H_i$, can be mapped equivariantly to
		$S(V_i)$, where $ V_i $ a irreducible representation given by the character 
		$ G/H_i\cong\mathbb{Z}_p\hookrightarrow S^1$. Since $n(H_i)+1$ is an even number,
		we should have $M_i\cong S(V_i)\oplus\cdots\oplus S(V_i)\cong S(V_{H_i})$, where 
		$\dim_{\mathbb{C}} S(V_{H_i})=n(H_i)$.
		
		Thus $f\colon X\to G/K_1\star \cdots\star G/K_{n+1}\cong
		S(V_{H_1})\star\cdots\star S(V_{H_t})\cong S(V)$, where $\dim_{\mathbb{C}} S(V)=
		\sum_i n(H_i)=n$.
	\end{proof}

	\begin{theorem} \label{theorem: bureverse}
		Let $ G = (\mathbb{Z}_p)^k $ and $ X $ be a $(\textup{mod}\,p)$-cohomology $n$-sphere 
		such that $ X$ is a $ G $-ANR space and  $X^G=\emptyset $.
		There exist a  $ G $-equivariant map between $ X $ and a representation sphere
		$S(V) $ of $ G $ with
		$ V^G=\{0\} $ if, and only if, $ \dim X^H \leq \dim SV^H$,
		for all $ H $ subtori of rank $ k-1 $ such that $ X^H\neq \emptyset $.
	\end{theorem}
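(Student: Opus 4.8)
The plan is to prove the two implications separately, using the Borsuk--Ulam corollary for the necessity and the Existence lemma for the sufficiency. For the forward direction, suppose $f\colon X\to S(V)$ is $G$-equivariant with $V^G=\{0\}$. Since $X^G=\emptyset$, the representation sphere $S(V)$ also satisfies $S(V)^G = S(V^G)=\emptyset$, and both $X$ and $S(V)$ are $(\textup{mod}\,p)$-cohomology spheres (representation spheres are, by the standard computation, and $X$ is by hypothesis). Hence Corollary \ref{theorem: bu} applies directly: for every subtorus $H<G$ of rank $k-1$ we get $\dim X^H\le \dim S(V)^H$. This already gives the inequality for all $H$ with $X^H\ne\emptyset$ (in fact for all $H$ of rank $k-1$, which is slightly more), so the forward implication is essentially immediate.

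For the converse, the strategy is: first produce, via Lemma \ref{lemma: existence}, \emph{some} $G$-equivariant map $g\colon X\to S(W)$ into a representation sphere with $W^G=\{0\}$ and $\dim S(W)=n$; then modify the target so that it becomes $S(V)$ itself. Concretely, Lemma \ref{lemma: existence} gives a $G$-map $X\to G/K_1\star\cdots\star G/K_{n+1}$, and regrouping as in the proof of that lemma identifies the target with $S(W)=S(V_{H_1})\star\cdots\star S(V_{H_t})$, where the block $S(V_{H_i})$ has complex dimension $n(H_i) = \dim X^{H_i}$ (here $H_1,\dots,H_t$ are the rank-$(k-1)$ subtori with $X^{H_i}\ne\emptyset$). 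Now I want to compare $S(W)$ with $S(V)$. Decompose $V$ into its isotypical (irreducible) summands under $G=(\mathbb{Z}_p)^k$; each irreducible real $2$-dimensional piece is determined by a character $\chi$, whose kernel is a rank-$(k-1)$ subtorus, and the fixed set $S(V)^H$ has complex dimension equal to the number of summands whose character is trivial on $H$, which one checks equals $\dim S(V)^H$ in the indexing of the Borel formula. The hypothesis $\dim X^H\le \dim S(V)^H$ for all such $H$ then says precisely that, block by block over the subtori $H_i$, the multiplicity of the $H_i$-fixed directions in $W$ is at most that in $V$; equivalently $V\cong W\oplus U$ as $G$-representations for some $G$-module $U$ (with $U^G=\{0\}$ since $V^G=W^G=\{0\}$). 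Here is where a little care is needed: I should phrase the matching using the Borel formula $n+1=\sum_H(\dim X^H+1)$ and the analogous identity for $S(V)$, so that the constraint becomes compatible with the dimension bookkeeping.

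Granting $V\cong W\oplus U$, there is a standard $G$-equivariant inclusion $S(W)\hookrightarrow S(W\oplus U)=S(V)$ (send $w\mapsto (w,0)$, a map of spheres of the appropriate dimensions, equivariant because the decomposition is $G$-invariant). Composing with $g$ yields the desired $G$-equivariant map $X\to S(V)$, completing the converse. The main obstacle I anticipate is the bookkeeping in the previous paragraph: turning the collection of inequalities $\dim X^H\le \dim S(V)^H$ into an honest $G$-module embedding $W\hookrightarrow V$. This requires knowing that the ``profile'' of fixed-point dimensions over all rank-$(k-1)$ subtori determines a sum of irreducibles up to the multiplicity of the trivial summand --- true for $(\mathbb{Z}_p)^k$ because distinct nontrivial characters have distinct kernels when $p$ is prime --- and then checking that the inequalities, together with the Borel-formula constraints on both sides, are exactly what is needed for one multiplicity vector to dominate the other. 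Once that linear-algebra/representation-theory step is in place, the rest is formal.
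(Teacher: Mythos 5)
The forward implication is fine and matches the paper: it is an immediate application of Corollary \ref{theorem: bu}, since $V^G=\{0\}$ gives $S(V)^G=\emptyset$. Your converse also starts as the paper does, by invoking Lemma \ref{lemma: existence} to get a $G$-map $X\to S(W)$ with $\dim_{\mathbb{C}}S(W)^{H}=\dim X^{H}$ for the relevant subtori. The gap is in your final step, where you claim the inequalities $\dim S(W)^H\le\dim S(V)^H$ force a $G$-module embedding $V\cong W\oplus U$, justified by the assertion that distinct nontrivial characters of $(\mathbb{Z}_p)^k$ have distinct kernels. That assertion is true only for $p=2$. For $p>2$, each rank-$(k-1)$ subtorus $H$ is the common kernel of all $p-1$ nontrivial characters of $G/H\cong\mathbb{Z}_p$, so the profile of fixed-point dimensions over rank-$(k-1)$ subtori only records, for each $H$, the total multiplicity of summands with kernel $H$; it does not determine the isotypical decomposition, and domination of fixed-point dimensions does not yield a subrepresentation. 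Concretely, for $G=\mathbb{Z}_p$ with $p\ge 5$, take $W=V_\chi$ and $V=V_{\chi^2}$ (irreducible $2$-dimensional real representations with characters $\chi$ and $\chi^2$, $\chi^2\neq\chi^{\pm1}$): the fixed-point inequalities hold trivially, $W$ does not embed in $V$, yet an equivariant map $S(W)\to S(V)$ exists, namely $z\mapsto z^2$. Since Lemma \ref{lemma: existence} gives you no control over which character with kernel $H_i$ appears in $W$, your linear inclusion $S(W)\hookrightarrow S(V)$ simply may not exist, even though the theorem's conclusion is true.

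The missing ingredient is a nonlinear existence theorem for equivariant maps between representation spheres: the paper concludes by citing Marzantowicz \cite[Teorema 2.5]{MAR1}, which produces a $G$-equivariant map $S(W)\to S(V)$ precisely under the hypothesis $\dim S(W)^H\le\dim S(V)^H$ for all rank-$(k-1)$ subtori $H$ (one could also use \cite[Theorem 3.2]{BMMS} after replacing $X$ by a $G$-CW complex, as the paper remarks). With that citation in place of your embedding argument, your proof coincides with the paper's; as written, your argument is only valid for $p=2$, where nontrivial characters are indeed in bijection with their kernels and the multiplicity vector of $V$ is recovered from the fixed-set dimensions.
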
	

	\begin{proof}
		From Corollary \ref{theorem: bu} 
		we have a necessary condition for the existence of the map.
		Now suppose that $\dim X^H\leq  \dim_{\mathbb{C}} SV^H $, for all   $ H $.
		From Lemma \ref{lemma: existence} there exists a   $ G $-equivariant map 
		$ X\to SW $, where 
		$ \dim_{\mathbb{C}} SW=\dim X $ such that $ \dim X^H= \dim_{\mathbb{C}} SW^H $, 
		for all subtori $ H $
		of  rank $ k-1 $ with $ X^H\neq \emptyset $.
		By 	\cite[Teorema 2.5]{MAR1}, 
		there exists a   $ G $-equivariant map
		between $ SW $ and  $ SV $, thus there exists a  $ G $-equivariant map 
		between $ X $ and $ SV $.
	\end{proof}		

	\begin{remark}
		For an alternative proof, considering that a $G$-ANR space
		is $G$-homotopy equivalent to  a $G$-CW complex \cite[Theorem 13.3]{MUR}
		one could use the result given in \cite[Theorem 3.2]{BMMS}.
	\end{remark}

	As a consequence we obtain that the Euler class of a $(\textup{mod}\,p)$-cohomology sphere  
	$ X $ such that $ X^G=\emptyset $ will be
	polynomial when $ X $ is $ (\mathbb{Z}_p)^k $-ANR and $p>2$.
	
	\begin{corollary}\label{corollary: e_polynomial}
		Let $ G=(\mathbb{Z}_p)^k $, for $p>2$, and $ X $ a $(\textup{mod}\,p)$-cohomology $n$-sphere such that $ X^G=\emptyset $
		and $ X $ is a $ G $-ANR space. Then
		\begin{itemize}
			\item[a)] $ \ell(X)=
			\displaystyle\frac{n+1}{2}$.
			\item[b)] the Euler class $ e $ of $ X $ is polynomial.
		\end{itemize}	
	\end{corollary}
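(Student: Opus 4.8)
The plan is to prove (a) first and then read off (b) from the exact value of $\ell(X)$. For (a), observe that $X^G=\emptyset$ forces the fixed-point cohomology sphere $X^G$ to have dimension $r=-1$, so $n-r=n+1$; hence Theorem~\ref{theorem: length_cohom_sphere} already gives $\ell(X)\ge\frac{n+1}{2}$ (this lower bound is part of that theorem in every case, including when $e$ is not polynomial). For the reverse inequality I would invoke Lemma~\ref{lemma: existence}, which produces a $G$-equivariant map $X\to S(V)$ onto a representation sphere of the same dimension $n$ with $V^G=\{0\}$. Since the Euler class of a representation sphere is polynomial, Theorem~\ref{theorem: length_cohom_sphere} (equivalently \cite[Proposition 2.4]{BartschClapp}) gives $\ell(S(V))=\frac{n+1}{2}$, and monotonicity of the length (Proposition~\ref{proposition: length_prop}(i)) yields $\ell(X)\le\ell(S(V))=\frac{n+1}{2}$. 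Combining the two inequalities proves (a).

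For (b), note first that since $X^G=\emptyset$ we have $H_G^*(X)=H_G^*(X,X^G)\cong H^*(BG)/(e)$, that $e$ is homogeneous of degree $n-r=n+1$, and that by Remark~\ref{remark: annihilator} the annihilator of $H_G^*(X)$ is exactly the ideal $(e)$. Now use part (a): working with the equivalent description of the length via the family of orbits $G/H$ with $H$ of rank $k-1$ and the ideal $I=P^*(G)$, the equality $\ell(X)=\frac{n+1}{2}$ furnishes rank-$(k-1)$ subtori $H_1,\dots,H_{(n+1)/2}$ for which the condition in the definition of the length is satisfied. Since $P^*(G)\cap\ker[H^*(BG)\to H_G^*(G/H_i)]=(s_{H_i})$ and $(e)$ is an ideal, that condition is equivalent to $s_{H_1}\cdots s_{H_{(n+1)/2}}\in(e)$. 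Each $s_{H_i}$ is a nonzero element of $P^*(G)=\mathbb{Z}_p[t_1,\dots,t_k]$ of degree $2$, and this ring is an integral domain, so the product $s_{H_1}\cdots s_{H_{(n+1)/2}}$ is a nonzero homogeneous element of $P^*(G)$ of degree $n+1=\deg e$. As it lies in $(e)$ and has the same degree as $e$, we obtain $e=c^{-1}\,s_{H_1}\cdots s_{H_{(n+1)/2}}$ for some nonzero $c\in\mathbb{Z}_p$; in particular $e\in P^*(G)$, i.e.\ $e$ is polynomial.

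The only delicate point is the passage from the numerical value $\ell(X)=\frac{n+1}{2}$ to the factorization $s_{H_1}\cdots s_{H_{(n+1)/2}}\in(e)$: for fixed subtori $H_1,\dots,H_\lambda$, the defining condition of the length (``$\omega_1\cdots\omega_\lambda\cdot\gamma=0$ for every choice $\omega_i\in(s_{H_i})$ and every $\gamma\in H_G^*(X)$'') is equivalent to $s_{H_1}\cdots s_{H_\lambda}\in\operatorname{Ann}(H_G^*(X))=(e)$, which follows at once from the ideal identity $(s_{H_1})\cdots(s_{H_\lambda})=(s_{H_1}\cdots s_{H_\lambda})$. The remaining ingredients---that $\deg e=n-r=n+1$ and that $P^*(G)$ is a domain---are routine. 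An essentially equivalent proof of (b) instead applies $H_G^*(-)$ to the composite $X\to S(V)\to\mathrm{pt}$ of Lemma~\ref{lemma: existence}: the resulting surjections out of $H^*(BG)$ have kernels $(e(S(V)))$ and $(e)$ (Theorem~\ref{theorem: borel_euler}(i) and Remark~\ref{remark: annihilator}), so $(e(S(V)))\subseteq(e)$, and comparing degrees forces $e=c^{-1}e(S(V))$, which is polynomial.
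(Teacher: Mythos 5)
Your proposal is correct and takes essentially the same route as the paper: the lower bound from Theorem \ref{theorem: length_cohom_sphere}, the upper bound via the map of Lemma \ref{lemma: existence} together with monotonicity and $\ell(S(W))=\frac{n+1}{2}$, and for part b) the observation that $\ell(X)=\frac{n+1}{2}$ produces a polynomial element of $(e)$ (namely $s_{H_1}\cdots s_{H_{(n+1)/2}}$) of the same degree as $e$, forcing $e$ to be polynomial. You merely make explicit the degree-comparison and annihilator steps that the paper compresses into ``there exists $\alpha\in(e)\cap P^*(G)$ with same degree as $e$, then $(\alpha)=(e)$''.
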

	\begin{proof}[Proof]
		For the item $ a) $, from Theorem \ref{theorem: length_cohom_sphere} or \cite[Proposition 3.6]{BMS} 
		we already have that $\frac{n+1}{2}\leq \ell(X)$.
		Now, since exists $ f\colon X\to SW $  such that
		$ \dim_{\mathbb{C}} SW=\dim X $ and
		$ \ell(SW)=\frac{n+1}{2}$, it follows that $\ell(X)\leq \ell(SW)
		\leq\frac{n+1}{2}$. This implies that there exists
		an polynomial element $\alpha \in (e)\cap P^*(G)$
		with same degree as $e$. Then $(\alpha)=(e)$ and part $b)$ follows.
	\end{proof}

%
%----------------------------------------------------------------------------------------------
\section{An upper-bound for the length and Bourgin-Yang theorems}
	As mentioned in the introduction, we will study an upper-bound for the length,
	first considering any compact Lie group, and then we specialize to the $p$-torus
	and torus case to obtain a Bourgin-Yang type result.
	Let $G$ be a compact Lie group and $X$ a compact $G$-space.
	In \cite{Segala,Segal} one can find, in the context
	of equivariant $K$-theory $K^*_G$, a filtration for $K^*_G(X)$. 
	The construction can be easily adapted for the Borel cohomology theory and
	is given as follows. Let $R$ be a commutative ring with unity.
	
	For any finite $G$-closed cover $\mathcal{U}$ of $X$, let $N_\mathcal{U}$ be the
	nerve of such cover. We can associate a $G$-compact space 
	$X_\mathcal{U}=\bigcup_{\sigma\in N_\mathcal{U}}(U_\sigma\times|\sigma|)
	\subset X\times |N_\mathcal{U}|$, where 
	$U_\sigma=\bigcap_{\alpha\in\sigma}U_\alpha\neq \emptyset$ and
	$|N_\mathcal{U}|$ is the geometric realization of $N_\mathcal{U}$. Let 
	$X^p_\mathcal{U}=\bigcup_{\dim(\sigma)\leq p}(U_\sigma\times|\sigma|)$. 
	This gives a filtration of $G$-subspaces 
	$X^0_\mathcal{U}\subset X^1_\mathcal{U}\subset X^2_\mathcal{U}\subset\cdots X_\mathcal{U}$.
	We say that an element of $H^*_G(X;R)$ is in $H^*_{G,s}(X;R)$ if, for some finite $G$-closed
	cover $\mathcal{W}$ of $X$, the element is in $\ker[H^*_G(X;R)\to
	H^*_G(X_\mathcal{W}^{s-1};R)]$. 
		
	\begin{lemma}\label{lemma: segal_construction}
		Let $G$ be a compact Lie group and $X$ a compact $G$-space. For any finite
		$G$-closed cover $\mathcal{U}$ of $X$:
		\begin{itemize}
			\item[a)] The projection on the first coordinate $\pi_1\colon X_{\mathcal{U}}
			\to X$ induces an isomorphism $\pi_1^*\colon H_G^*(X_{\mathcal{U}};R)
			\to H_G^*(X;R)$.
			
			\item[b)] If $\mathcal{V}$ is a refinement of $\mathcal{U}$, there exists
			a $G$-equivariant map $X_\mathcal{V}\to X_\mathcal{U}$, defined
			up to $G$-homotopy, that respects the filtrations and the projections onto X.
			Thus, $H^*_G(X;R)=H^*_{G,0}(X;R)\supseteq H^*_{G,1}\supseteq 
			\cdots\supseteq H^*_{G,s}(X;R)\cdots$.
			
			\item[c)] $H^*_{G,s}(X;R)\cdot H^*_{G,s'}(X;R)\subset H^*_{G,s+s'}(X;R)$,
			where $``\cdot"$ represents the multiplication between rings.
			\item[d)] $H^*_{G,1}(X;R)=\displaystyle\bigcap_{x\in X}
			\ker[H^*_G(X;R)\to H^*_G(G/G_x;R)]$.
		\end{itemize}
	\end{lemma}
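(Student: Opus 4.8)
The plan is to follow, step for step, Segal's construction of the nerve filtration on equivariant $K$-theory \cite{Segal,Segala}, replacing the formal inputs used there by their Borel-cohomology counterparts: the long exact sequences and Mayer--Vietoris, the relative cup product, the induction isomorphism $H_G^*(G\times_H Y;R)\cong H_H^*(Y;R)$, and --- crucially, since $H^*$ denotes \v{C}ech cohomology --- the continuity of the theory together with the Vietoris--Begle mapping theorem. A single geometric observation does most of the work: for $x\in X$ the fibre of $\pi_1\colon X_{\mathcal{U}}\to X$ over $x$ is $\{x\}\times|\sigma_x|$, where $\sigma_x$ is the full simplex on $\{\alpha\mid x\in U_\alpha\}$, so it is contractible and, being a simplex, has a $G_x$-fixed point (its barycentre) --- hence it is even $G_x$-equivariantly contractible. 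Observe too that the members of a $G$-closed cover being $G$-invariant, $G$ acts trivially on $N_{\mathcal{U}}$, the spaces $X$, $X_{\mathcal{U}}$, $|N_{\mathcal{U}}|$ are all compact, the skeletal filtration is by $G$-invariant subspaces, and $X^0_{\mathcal{U}}=\coprod_\alpha U_\alpha$ with $G$ acting on each $U_\alpha$.

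For (a) I would apply Vietoris--Begle to the map of Borel constructions $(X_{\mathcal{U}})_G\to X_G$ induced by $\pi_1$: its fibres are homeomorphic to those of $\pi_1$, hence \v{C}ech-acyclic, and it is a closed surjection of compact Hausdorff spaces (should one wish to stay within compact spaces, compute $H_G^*$ through the finite skeleta of $EG$ and take the limit), so $\pi_1^*$ is an isomorphism in all degrees. For (b), a refinement $\mathcal{V}\prec\mathcal{U}$ gives a simplicial map $N_{\mathcal{V}}\to N_{\mathcal{U}}$ after choosing $\alpha\colon\mathcal{V}\to\mathcal{U}$ with $V\subseteq\alpha(V)$, hence a map $X_{\mathcal{V}}\to X_{\mathcal{U}}$ that is the identity on the $X$-coordinate (so it commutes with the projections, and is $G$-equivariant since $G$ acts only on that coordinate) and does not raise simplex dimension (so it respects the filtrations); two choices of $\alpha$ are contiguous and give homotopic such maps, which is the asserted uniqueness up to $G$-homotopy. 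The remaining assertions of (b) are then formal: each of these maps is $H^*(BG)$-linear and preserves the skeletal pieces, so passing to a common refinement shows that refining the witnessing cover only enlarges the corresponding kernel and that $H^*_{G,s}(X;R)$ is an $H^*(BG)$-submodule; $X^{s-1}_{\mathcal{W}}\subseteq X^{s}_{\mathcal{W}}$ gives $H^*_{G,s}\supseteq H^*_{G,s+1}$; and $X^{-1}_{\mathcal{W}}=\varnothing$ with $H^*_G(\varnothing;R)=0$ gives $H^*_{G,0}(X;R)=H^*_G(X;R)$.

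Part (d) should come out cleanly. For ``$\subseteq$'': if $\omega\in H^*_{G,1}(X;R)$, choose $\mathcal{W}$ with $\pi_1^*\omega$ vanishing on $X^0_{\mathcal{W}}=\coprod_\alpha W_\alpha$; for $x\in X$, the $W_\alpha$ containing $x$ is $G$-invariant, hence contains the entire orbit $G/G_x=G\cdot x$, so the orbit inclusion factors through $W_\alpha\hookrightarrow X$ and $\omega$ dies in $H_G^*(G/G_x;R)$. For ``$\supseteq$'': suppose $\omega$ dies on every orbit. By the slice theorem for the compact Lie group $G$, the compact space $X$ has a finite cover by closed linear tubes $\mathcal{W}=\{\,G\cdot\overline{S_j}\,\}$ with $\overline{S_j}$ a closed ball in a linear $G_{x_j}$-slice at $x_j$; each $G\cdot\overline{S_j}\cong G\times_{G_{x_j}}\overline{S_j}$ is $G$-invariant and $G$-equivariantly deformation retracts onto $G/G_{x_j}$ (contract $\overline{S_j}$ linearly to $x_j$), so $H_G^*(G\cdot\overline{S_j};R)\cong H_G^*(G/G_{x_j};R)$, and under this identification $\pi_1^*\omega|_{G\cdot\overline{S_j}}$ is exactly the restriction of $\omega$ to $G/G_{x_j}$, which is $0$; since $X^0_{\mathcal{W}}=\coprod_j G\cdot\overline{S_j}$, it follows that $\pi_1^*\omega$ vanishes on $X^0_{\mathcal{W}}$, i.e.\ $\omega\in H^*_{G,1}(X;R)$.

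The step I expect to be the main obstacle is (c). Given $\omega\in H^*_{G,s}$ and $\omega'\in H^*_{G,s'}$, pass by (b) to a common cover $\mathcal{W}$, so $\pi_1^*\omega$ vanishes on $X^{s-1}_{\mathcal{W}}$ and $\pi_1^*\omega'$ on $X^{s'-1}_{\mathcal{W}}$, and lift them to relative classes $\bar\omega\in H_G^*(X_{\mathcal{W}},X^{s-1}_{\mathcal{W}};R)$, $\bar\omega'\in H_G^*(X_{\mathcal{W}},X^{s'-1}_{\mathcal{W}};R)$. On the fibre product $P=X_{\mathcal{W}}\times_X X_{\mathcal{W}}$, whose projection to $X$ again has contractible fibres (so $H_G^*(P;R)\cong H_G^*(X;R)$), pull $\bar\omega$ and $\bar\omega'$ back along the two projections and form their relative cup product: one gets a class in $H_G^*\!\bigl(P,\,(X^{s-1}_{\mathcal{W}}\times_X X_{\mathcal{W}})\cup(X_{\mathcal{W}}\times_X X^{s'-1}_{\mathcal{W}});R\bigr)$ whose image in $H_G^*(P;R)$ corresponds to $\omega\omega'$. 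Since the exhibited subspace contains every point lying, over $X$, in a product cell $\sigma\times\tau$ with $\dim\sigma+\dim\tau\le s+s'-1$, what is left is to compare this ``total-degree'' filtration of $P$ with the skeletal filtration of the thickening of a suitable cover $\mathcal{C}$ refining $\mathcal{W}$ --- this is the combinatorial heart of Segal's argument, where subdivisions of the cover are brought in --- so as to obtain a map $X_{\mathcal{C}}\to P$ over $X$ carrying $X^{d}_{\mathcal{C}}$ into the total-degree-$\le d$ part of $P$ for every $d$; pulling the relative class back along it then exhibits $\pi_1^*(\omega\omega')$ as vanishing on $X^{s+s'-1}_{\mathcal{C}}$, i.e.\ $\omega\omega'\in H^*_{G,s+s'}(X;R)$. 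The only substantial point --- and the one I would write out in detail, following \cite{Segal} --- is this matching of filtrations; everything else is purely formal, the sole properties of $H_G^*(-;R)$ used being exactness, multiplicativity, continuity, and the induction isomorphism.
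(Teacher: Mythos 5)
Your overall strategy is the right one, and it is essentially the only route available: the paper itself offers no argument beyond citing \cite{Segala,Segal}, so an honest proof has to adapt Segal's nerve construction to Borel cohomology exactly as you set out to do. Parts a) and b) are fine (the contractible-fibre observation plus Vietoris--Begle, handled through finite skeleta of $EG$, and the contiguity argument for refinements), and the ``$\subseteq$'' half of d) is correct.

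There is, however, a genuine gap in the ``$\supseteq$'' half of d). You invoke a cover of $X$ by closed tubes $G\cdot\overline{S_j}\cong G\times_{G_{x_j}}\overline{S_j}$ with $\overline{S_j}$ a closed ball in a \emph{linear} slice, and use that such a tube $G$-equivariantly deformation retracts onto the orbit. The lemma is stated for an arbitrary compact $G$-space: the Mostow--Palais slice theorem does give slices and tubes in this generality, but the slices are not balls in representations and the tubes need not retract (equivariantly or otherwise) onto the orbit; linear slices are available only for locally smooth actions, e.g.\ on manifolds. So as written this direction only proves d) for a restricted class of spaces. The correct replacement --- and the one implicit in Segal's continuity argument for $K_G$ --- is tautness/continuity of \v{C}ech cohomology, which you list among your formal inputs but do not use here: since $(Gx)_G$ is a closed subspace of the paracompact space $X_G$, one has $H^*_G(Gx;R)\cong\varinjlim H^*_G(N;R)$ over closed $G$-invariant neighborhoods $N$ of the orbit (invariant neighborhoods are cofinal because $G$ is compact), so $\omega|_{G/G_x}=0$ forces $\omega|_N=0$ for some such $N$; compactness of $X$ then yields a finite $G$-closed cover $\mathcal{W}$ by sets on which $\omega$ vanishes, i.e.\ $\pi_1^*\omega$ dies on $X^0_{\mathcal{W}}$. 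Separately, note that for c) you have only reduced the statement to a fibrewise, $G$-equivariant cellular approximation of the diagonal $X_{\mathcal{W}}\to X_{\mathcal{W}}\times_X X_{\mathcal{W}}$ and then deferred that step to \cite{Segal}; that is the actual content of c), so, while this matches the paper's own level of detail, it should be flagged as not yet proved rather than as routine.
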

	\begin{proof}
		For items a), b),d) we refer to \cite{Segala}, and for d) to \cite{Segal}.
	\end{proof}
	
	Let us suppress the ring of coefficients from our notation and keep the standards choices
	when we specialize to the $p$-torus cases.

	\begin{remark}\label{remark: dim_X}
		If $\dim(X)=n<\infty$ (covering dimension), then $H^*_{G,\dim X+1}(X)=0$. Indeed,
		there is a cover $\mathcal{U}$ of X such that  $X_\mathcal{U}=X^m_\mathcal{U}$,
		for all $m\leq\dim X$. Thus, by Lemma \ref{lemma: segal_construction} a), 
		$H^*_{G,\dim X +1}=\ker[H^*_G(X)\to
		H^*_G(X_\mathcal{U})]=\{0\}$.		
	\end{remark}

	Now we present some relations of this construction with the length.
	
	\begin{corollary}\label{corollary: segal_length}
		Let $G$ be a compact Lie group and $X$ a compact $G$-space.
		Consider $\mathcal{H}=\{H_\gamma\}_{\gamma\in \Gamma}$ the collection of all maximal
		isotropy subgroups in $G$, i.e., if $x\in X^{H_\gamma}$, we have $G_x=H_\gamma$,
		for all $\gamma\in\Gamma$. Then $H^*_{G,1}(X)=\bigcap_{\gamma\in\Gamma}
		\ker[H^*_G(X)\to H^*{(BH_\gamma)}]$.  
	\end{corollary}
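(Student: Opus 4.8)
The plan is to deduce this corollary from Lemma \ref{lemma: segal_construction}(d), which already identifies $H^*_{G,1}(X)$ with $\bigcap_{x\in X}\ker[H^*_G(X)\to H^*_G(G/G_x)]$. The goal is therefore to show that the intersection over \emph{all} points $x$ can be replaced by the intersection over a set of representatives of the maximal isotropy subgroups, together with the identification $H^*_G(G/H_\gamma)\cong H^*(BH_\gamma)$ (which follows from the fact that $(G/H)_G\simeq BH$). Throughout, ``maximal'' should be understood with respect to inclusion; on a compact $G$-space only finitely many conjugacy classes of isotropy groups occur, so $\mathcal H$ is a finite (up to conjugacy) collection, which is all we need for the intersection to make sense.

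First I would record the two elementary facts. (i) If $K\subseteq H$ are closed subgroups of $G$, the projection $G/K\to G/H$ is $G$-equivariant and induces $H^*_G(G/H)\to H^*_G(G/K)$; hence $\ker[H^*_G(X)\to H^*_G(G/H)]\subseteq\ker[H^*_G(X)\to H^*_G(G/K)]$, where the maps out of $H^*_G(X)$ are induced by the orbit inclusions $G/K\hookrightarrow X$ and $G/H$ arising from a point with isotropy $H$ (one must check the relevant triangle of equivariant maps commutes up to $G$-homotopy — this is where one uses that an orbit with smaller isotropy maps to one with larger isotropy). (ii) $\ker[H^*_G(X)\to H^*_G(G/G_x)]$ depends only on the conjugacy class of $G_x$, since conjugate orbits are $G$-homotopic (indeed $G$-homeomorphic). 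Combining (i) and (ii): for every $x\in X$ there is a maximal isotropy subgroup $H_\gamma$ with $G_x$ subconjugate to $H_\gamma$, so $\ker[H^*_G(X)\to H^*_G(BH_\gamma)]\subseteq\ker[H^*_G(X)\to H^*_G(G/G_x)]$. Taking intersections, $\bigcap_{\gamma}\ker[H^*_G(X)\to H^*(BH_\gamma)]\subseteq\bigcap_{x\in X}\ker[H^*_G(X)\to H^*_G(G/G_x)]=H^*_{G,1}(X)$.

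For the reverse inclusion, note that each $H_\gamma$ \emph{is} the isotropy group $G_x$ of some point $x\in X^{H_\gamma}$ (by the definition of $\mathcal H$ given in the statement), so $\ker[H^*_G(X)\to H^*(BH_\gamma)]$ is one of the terms in the big intersection defining $H^*_{G,1}(X)$; therefore $H^*_{G,1}(X)\subseteq\bigcap_{\gamma}\ker[H^*_G(X)\to H^*(BH_\gamma)]$. The two inclusions give the claimed equality.

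The main obstacle is the commutativity in step (i): one needs that if $G_x\subseteq G_y$ (after conjugating so that $y$ lies in a neighbourhood controlled by the slice at $x$, or more cleanly, that $G_x$ is subconjugate to some maximal $H_\gamma$), then the orbit inclusion $G/G_x\hookrightarrow X$ factors up to $G$-homotopy through $G/H_\gamma\hookrightarrow X$. On a general compact $G$-space the cleanest route is via the slice theorem: a point $x$ with isotropy $G_x$ has a $G_x$-slice $S$, and points of $G\times_{G_x}S$ near the orbit $G/G_x$ have isotropy subconjugate to $G_x$; running this argument at a point realizing a maximal isotropy type and using that $X$ is covered by finitely many such tubes lets one compare the orbit maps. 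Alternatively, and perhaps more in the spirit of the paper, one can avoid point-set subtleties by using the filtration directly: the orbit map $G/G_x\to X$ lifts into $X_{\mathcal U}$ at filtration level $0$, and refining $\mathcal U$ so that each member meets only orbits of isotropy type $\le H_\gamma$ for a single maximal $H_\gamma$ shows that the level-$0$ stratum retracts $G$-equivariantly onto the disjoint union of the $G/H_\gamma$'s; then $\ker[H^*_G(X)\to H^*_G(X^0_{\mathcal U})]=\bigcap_\gamma\ker[H^*_G(X)\to H^*(BH_\gamma)]$ and Lemma \ref{lemma: segal_construction}(a),(d) finish the argument.
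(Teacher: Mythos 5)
Your route is the same as the paper's: reduce to Lemma \ref{lemma: segal_construction}(d) and prove two inclusions, the nontrivial one being that whenever $G_x$ is (sub)conjugate to a maximal isotropy subgroup $H_\gamma$ one has $\ker[H^*_G(X)\to H^*(BH_\gamma)]\subseteq\ker[H^*_G(X)\to H^*_G(G/G_x)]$. You are right to single out this step as the main obstacle (the paper's own proof simply asserts it), but neither of your sketched repairs closes it, and in fact the step --- and the stated equality itself --- fails without extra hypotheses. The restriction to the orbit $G(x)$ factors, up to $G$-homotopy, through the restriction to an orbit of isotropy type $H_\gamma$ only if $G(x)$ can be equivariantly deformed into such an orbit (for instance if $x$ lies in a tube about it); but $X$ need not be covered by tubes around orbits of maximal isotropy type, which is precisely what both your slice argument and your claimed $G$-retraction of $X^0_{\mathcal U}$ onto $\bigsqcup_\gamma G/H_\gamma$ would require. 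Concretely, let $G$ be finite, $K\subsetneq G$ a nontrivial subgroup, and $X=G/K\sqcup\{\mathrm{pt}\}$ with the extra point fixed. Then $\mathcal H=\{G\}$ (the fixed point lies in $X^K$ and has isotropy $G$, so $K$ is not maximal in the sense of the statement), $H^*_G(X)\cong H^*(BK)\oplus H^*(BG)$, and any class $(\alpha,0)$ with $0\neq\alpha\in H^*(BK)$ is killed by restriction to the unique maximal orbit but restricts nontrivially to $G/K$; by Lemma \ref{lemma: segal_construction}(d) it does not lie in $H^*_{G,1}(X)$, so the key kernel inclusion is false and the right-hand side of the corollary strictly contains the left. (Two smaller points: compactness of $X$ alone does not guarantee finitely many isotropy types, and the existence, for every $x$, of a maximal $H_\gamma$ containing $G_x$ also needs justification; but these are secondary to the failure above.)

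What does survive --- and is all that is needed later --- is the statement for classes pulled back from a point: if $w\in\bigcap_\gamma\ker[H^*(BG)\to H^*(BH_\gamma)]$, then $p_X^*(w)\in H^*_{G,1}(X)$, because for each $x$ the composite $H^*(BG)\to H^*_G(X)\to H^*(BG_x)$ is induced by $BG_x\to BG$ and hence factors through $H^*(BH_\gamma)$ for any $H_\gamma\supseteq G_x$; this factorization takes place at the level of classifying spaces and does not pass through $X$, so no deformation of orbits is needed. That weaker statement is exactly what is used in the proof of Theorem \ref{theorem: length_ubound}. So your proposal (like the paper's argument) should either add a hypothesis ensuring every orbit deforms equivariantly into a tube about an orbit of maximal isotropy type, or be restated for elements in the image of $p_X^*$ only.
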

	\begin{proof}
		Given $ x\in X $, if $G_x$ is not a maximal isotropy
		subgroup, then there exists a $\gamma\in\Gamma$ such that 
		$G_x<H_\gamma $. Then  $ G/G_x\to G/H_\gamma $ and,
		we conclude that, $ \ker [H^*_G(X)\to H^*(BH_\gamma)]\subset
		\ker [H^*_G(X)\to H^*_G(G/G_x)] $.
		Thus $\bigcap_{\gamma\in \Gamma}\ker [H^*_G(X)\to H^*_G(BH_\gamma)]
		\subseteq \bigcap_{x\in X}\ker [H^*_G(X)\longrightarrow H^*_G(G/G_x)]$.
		On the other hand, we have that		
		$\bigcap_{x\in X}\ker [H^*_G(X)\longrightarrow H^*_G(G/G_x)]\subseteq 
		\bigcap_{\gamma\in \Gamma}\ker [H^*_G(X)\to H^*(BH_\gamma)]$.
	\end{proof}
	
	\begin{theorem}\label{theorem: length_ubound}
		Let $G$ be a compact Lie group and $X$ a compact $G$-space.
		Suppose that $X^G=\emptyset$ and the number $\alpha(X)$ of maximal isotropy subgroups of  
		$X$	is finite. Then $\ell(X)\leq \alpha(X)\cdot(\dim X+1).$
	\end{theorem}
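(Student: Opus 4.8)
The plan is to run the argument entirely through the Segal-type filtration $H^*_{G,s}(X)$ introduced above; the guiding observation is that a product of one ``kernel element'' for each maximal isotropy type already lies in $H^*_{G,1}(X)$, so a $(\dim X+1)$-fold product of such blocks lies in $H^*_{G,\dim X+1}(X)=0$. Write $d=\dim X$ (if $d=\infty$ the asserted bound is vacuous, so assume $d<\infty$) and let $H_1,\dots,H_{\alpha(X)}$ be the maximal isotropy subgroups of $X$ as in Corollary~\ref{corollary: segal_length}; since $X^G=\emptyset$, each $H_j$ is a proper closed subgroup, so $G/H_j\in\mathcal{A}$. I would prove the estimate by exhibiting, in the sense of Definition~\ref{definition: l_H}, the list of $\lambda:=\alpha(X)\,(d+1)$ spaces of $\mathcal{A}$ consisting of $G/H_1,\dots,G/H_{\alpha(X)}$ repeated $d+1$ times, organized into $d+1$ consecutive blocks, each block containing exactly one copy of each $G/H_j$.

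The crucial step is the following claim: for one block and any admissible elements $\omega_j\in I\cap\ker[H^*(BG)\to H^*_G(G/H_j)]=I\cap\ker[H^*(BG)\to H^*(BH_j)]$, $j=1,\dots,\alpha(X)$, the product $\beta:=p_X^*(\omega_1)\cup\cdots\cup p_X^*(\omega_{\alpha(X)})$ lies in $H^*_{G,1}(X)$. To see this, note that for each $j$ the composite $H^*(BG)\xrightarrow{p_X^*}H^*_G(X)\to H^*_G(G/H_j)\cong H^*(BH_j)$ is exactly the homomorphism induced by $BH_j\to BG$; hence $p_X^*(\omega_j)$ belongs to the ideal $\ker[H^*_G(X)\to H^*(BH_j)]$, and therefore so does $\beta$. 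As this holds for every $j$, we get $\beta\in\bigcap_{j}\ker[H^*_G(X)\to H^*(BH_j)]$, which equals $H^*_{G,1}(X)$ by Corollary~\ref{corollary: segal_length}.

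Granting the claim, the theorem follows by bookkeeping. Fix arbitrary admissible elements $\omega_1,\dots,\omega_\lambda$ attached to the full list of $\lambda=\alpha(X)(d+1)$ orbits. Grouping the factors of $p_X^*(\omega_1)\cup\cdots\cup p_X^*(\omega_\lambda)$ according to the $d+1$ blocks, each block yields an element $\beta_g\in H^*_{G,1}(X)$ by the claim, so by the multiplicativity of the filtration (Lemma~\ref{lemma: segal_construction}(c)) we obtain $p_X^*(\omega_1\cdots\omega_\lambda)=\beta_1\cup\cdots\cup\beta_{d+1}\in H^*_{G,d+1}(X)$, and $H^*_{G,d+1}(X)=0$ by Remark~\ref{remark: dim_X} since $\dim X=d$. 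Consequently $\omega_1\cdots\omega_\lambda\cdot\gamma=0$ for every $\gamma\in H^*_G(X)=H^*_G(X,\emptyset)$, which is precisely the condition in Definition~\ref{definition: l_H}, so $\ell(X)\le\lambda=\alpha(X)(\dim X+1)$.

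The one point that requires genuine care, rather than bookkeeping, is the identification of the composite $H^*(BG)\to H^*_G(X)\to H^*_G(G/H_j)$ with restriction along $BH_j\to BG$: this is what converts ``$\omega_j$ annihilates $H^*(BH_j)$'' into ``$p_X^*(\omega_j)$ lies in the $j$-th kernel of Corollary~\ref{corollary: segal_length}'', and hence lets the product of one element per maximal isotropy type fall into $H^*_{G,1}(X)$. Once that is in place, the $(d+1)$-fold product lands in $H^*_{G,d+1}(X)=0$ purely by the filtration machinery, with no further difficulty; in particular the argument uses neither the specific choice of the ideal $I$ nor any hypothesis on $G$ beyond being a compact Lie group.
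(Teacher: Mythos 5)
Your proposal is correct and follows essentially the same route as the paper: identify the composite $H^*(BG)\to H^*_G(X)\to H^*_G(G/H_j)\cong H^*(BH_j)$ with restriction, use Corollary~\ref{corollary: segal_length} to place the product of one kernel element per maximal isotropy type in $H^*_{G,1}(X)$, and then invoke multiplicativity of the filtration together with Remark~\ref{remark: dim_X}. Your block-by-block bookkeeping with arbitrary admissible elements is in fact a slightly more careful verification of Definition~\ref{definition: l_H} than the paper's shortcut of raising a single product $\omega_1\cdots\omega_s$ to the power $\dim X+1$, but it is the same argument.
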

	\begin{proof}
		Let us say that $\mathcal{H}=\{H_i\}_{i\in\Gamma}$ is the collection
		of all maximal isotropy subgroups of $X$ and $|\Gamma|=\alpha(X)=s$.
		For any $ x\in X $, the composition $ G/G_x\hookrightarrow X
		\stackrel{p_{X}}{\to} \{\textup{pt}\} $ induces the diagram
		\begin{displaymath}
			\xymatrix{& H^*(BG)\ar[ld]_{p^*_{X}} \ar[rd] & \\
			H^*_G(X) \ar[rr]^{f_x} & & H^*(BG_x).}
		\end{displaymath}
		Given $w\in \bigcap_{i=1}^s\ker [H^*(BG)\to H^*(BH_i)]$,
		$ f_x\circ p^*_{X}(w) =0$, for all $ x\in X $ and, thus
		$ p^*_{X}(w)\in \bigcap_{x\in X} 
		\ker [H^*_G(X)\to H^*(BG_x)]=
		\bigcap_{i=1}^s\ker [H^*_G(X)\to H^*_G(BH_i)]=
		H^*_{G,1}(X)$. By Remark \ref{remark: dim_X},
		we have $p_X^*(w)^{\dim X+1}\in H^*_{G,\dim X+1}(X)=\{0\}$ and then
		$\omega^{\dim X+1}\cdot 1_X=0\in H^*_G(X)$.
		Considering $ \omega_i\in \ker[H^*_G(BG)\to H^*_G(BH_i)]$, for
		$ i=1,\ldots,s $, then $\omega=\omega_1\ldots\omega_{s}\in  
		\bigcap_{i=1}^s\ker [H^*(BG)\to H^*(BH_i)]$. 
		So $p^*_X(\omega^{\dim X+1})=0$ which implies $\ell(X)\leq s(\dim X+1)$.
	\end{proof}

	\begin{remark}\label{remark: dim_best}
		As commented in \cite[Example 6.1]{BMS}, one cannot expect better upper bound for
		this choice of length. For a family of distinct maximal subtori $H_i\subset G=(\mathbb{Z}_p)^k$, we have that for $X=\sqcup_{i=1}^t G/H_i$, $\alpha(X)=t$
		and $\dim(\sqcup_{i=1}^t G/H_i)=\dim(X)=0$, so $\ell(X)=t=\alpha(X)(\dim(X)+1)$.
		In the case $G=\mathbb{Z}_p$ we
		should have $\ell(X)\leq \dim X+1$.	Also,  by choosing a different
		collection for $\mathcal{A}$,
		$\widehat{\mathcal{A}}=\{A_1\sqcup \cdots\sqcup A_t;A_i\in\mathcal{A},\,t\geq 1\}$,
		the family of all finite disjoint union of orbit spaces in the definition of length,
		we  have $\ell_{\widehat{\mathcal{A}}}(X)\leq \dim X+1$.
	\end{remark}

	\begin{theorem}[Bourgin-Yang]\label{theorem: by1}
		Let $ G = (\mathbb{Z}_p)^k $ or $ (S^1)^k $,
		$ X$ a \linebreak$(\textup{mod}\,p)$-\-cohomology $ n $-sphere and 
		$ Y $ a $ G $-space, where $ X^G=\emptyset $  and
		$ Y-Y^G$ is a $(\textup{mod}\,p)$-cohomology $ m $-sphere.
		Given a $ G $-equivariant map  $ f\colon X\to Y $ and
		con\-sidering  $ Z_f=f^{-1}(Y^G) $, then the number $ \alpha=\alpha(Z_f)$ of subtori
		$ H\subset G$ of rank $ k-1 $ such that
		$X^H\subset Z_f\neq \emptyset $ is nonzero and
		$$\dim Z_f\geq  
		\begin{cases} 
			\frac{n-m}{\alpha}-1, \,\,\displaystyle\textrm{ if } p=2,\\
			\frac{n-m}{2\alpha}-1, \textrm{ if }  p=0 \textrm{ or }
			p>2 \textrm{ and } e(Y-Y^G)   \textrm{ is polynomial.}
		\end{cases}$$
		In particular, if $ n>m $, there is no 
		$ G $-equivariant map $ X\to Y-Y^G $.
	\end{theorem}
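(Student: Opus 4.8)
The plan is to reduce the Bourgin-Yang estimate to the length computation of Theorem \ref{theorem: length_cohom_sphere} combined with the upper bound of Theorem \ref{theorem: length_ubound}, applied to the ``zero set'' $Z_f$. First I would set up the relevant pieces: the restriction $f|_{Z_f}\colon Z_f\to Y^G$ is $G$-equivariant, and since $X$ is a compact $G$-space with $X^G=\emptyset$, the pair $(X,Z_f)$ makes sense with $Z_f$ closed in $X$. The key exact-sequence input is the long exact sequence of the pair $(X,Z_f)$ in Borel cohomology, together with monotonicity/subadditivity of the length: one has the general inequality $\ell(X)\le \ell(X,Z_f)+\ell(Z_f)$ (this is the standard ``continuity/subadditivity'' property of an index theory, which I would cite from \cite[Chapter 4]{Bartsch}; it is in the spirit of Proposition \ref{proposition: length_prop} but I will need the relative version). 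Then I would bound each term: on the left, $\ell(X)=\ell(X,X^G)$ since $X^G=\emptyset$, which by Theorem \ref{theorem: length_cohom_sphere} equals $n+1$ for $p=2$ and $(n+1)/2$ for $p=0$ or ($p>2$ with $e(X)$ polynomial — note $e(X)=e(X,\emptyset)$ is automatically polynomial here by the argument in Remark \ref{remark: polynomial}/Corollary \ref{corollary: e_polynomial} considerations, or one works with the weaker bound).

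Next I would control $\ell(X,Z_f)$. The $G$-equivariant map $f$ sends the pair $(X,Z_f)=(X,f^{-1}(Y^G))$ into $(Y,Y^G)$, so by monotonicity of the relative length $\ell(X,Z_f)\le \ell(Y,Y^G)=\ell(Y-Y^G)$ — here I use that $Y-Y^G$ being a $(\textup{mod}\,p)$-cohomology $m$-sphere gives, via Theorem \ref{theorem: length_cohom_sphere} applied with fixed set empty (after the standard identification $H_G^*(Y,Y^G)\cong H_G^*(Y-Y^G,\emptyset)$ up to the relevant degree, or directly via the Euler class $e(Y-Y^G)$), the value $\ell(Y,Y^G)=m+1$ for $p=2$ and $(m+1)/2$ in the other listed case. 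Finally, for $\ell(Z_f)$: since $X^G=\emptyset$ forces $Z_f^G\subset (Y^G)$-preimage to have empty... more precisely $Z_f$ is a compact $G$-space with $Z_f^G=\emptyset$ (as $X^G=\emptyset$), and its maximal isotropy subgroups are among the rank-$(k-1)$ subtori $H$ with $X^H\cap Z_f\neq\emptyset$, of which there are $\alpha=\alpha(Z_f)$. So Theorem \ref{theorem: length_ubound} gives $\ell(Z_f)\le \alpha\cdot(\dim Z_f+1)$. (I should first note $\alpha\ge 1$: if $\alpha=0$ then $\ell(Z_f)=0$, i.e. $Z_f=\emptyset$, forcing $\ell(X)\le \ell(Y,Y^G)$, i.e. $n+1\le m+1$, contradicting a posteriori nothing — so actually the $\alpha\ge 1$ claim needs the case $n>m$, or more carefully: $Z_f\neq\emptyset$ by Smith theory / the Borsuk-Ulam Corollary \ref{theorem: bu}-type argument since $X\to Y-Y^G$ cannot exist when... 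I'd phrase this as: $Z_f=\emptyset$ would give a $G$-map $X\to Y-Y^G$, and restricting to a rank-$(k-1)$ subtorus $H$ with $\dim X^H$ maximal contradicts monotonicity of $\ell_H$ when $n>m$; in general one argues $Z_f\neq\emptyset$ whenever the numeric bound is positive, and when it is $\le 0$ there is nothing to prove.)

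Assembling: $\ell(X)\le \ell(Y,Y^G)+\alpha(\dim Z_f+1)$, so for $p=2$, $n+1\le (m+1)+\alpha(\dim Z_f+1)$, giving $\dim Z_f\ge (n-m)/\alpha-1$; for $p=0$ or $p>2$ with $e(Y-Y^G)$ polynomial, $(n+1)/2\le (m+1)/2+\alpha(\dim Z_f+1)$, giving $\dim Z_f\ge (n-m)/(2\alpha)-1$. The final sentence ``if $n>m$ there is no $G$-equivariant map $X\to Y-Y^G$'' then follows because such a map would have $Z_f=f^{-1}(Y^G)=\emptyset$, yet $n>m$ makes the right-hand bound $\ge 0$ (indeed $>-1$), forcing $\dim Z_f\ge 0$, a contradiction.

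I expect the main obstacle to be the subadditivity inequality $\ell(X)\le \ell(X,Z_f)+\ell(Z_f)$ in the precise form needed, and the careful identification $\ell(Y,Y^G)=\ell(Y-Y^G)$ — the latter requires checking that the Euler class $e(Y-Y^G)$ (or the relevant annihilator of $H_G^*(Y,Y^G)$, cf. Remark \ref{remark: annihilator}) governs the length exactly as in Theorem \ref{theorem: length_cohom_sphere}, including the polynomial-vs-non-polynomial dichotomy for $p>2$. A secondary technical point is verifying that $Z_f$, a priori only a closed $G$-subspace of $X$, has finite covering dimension and finitely many maximal isotropy types so that Theorem \ref{theorem: length_ubound} genuinely applies; this follows since $Z_f\subset X$ and $X$ is a compact $G$-space that is a cohomology $n$-sphere (hence finite-dimensional), and the isotropy types occurring in $Z_f$ are among those in $X$, which are finite in number for $G$ a $p$-torus or torus.
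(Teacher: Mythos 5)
Your overall skeleton coincides with the paper's: the paper simply cites \cite[Theorem 3.1]{BMS} to get $\ell(Z_f)\geq \ell(X)-\ell(Y-Y^G)$ and then combines Theorems \ref{theorem: length_cohom_sphere} and \ref{theorem: length_ubound}, exactly as you do at the end. The problem is that you try to re-derive the BMS inequality yourself, and the middle of your chain is not a valid property of the length. The step $\ell(X,Z_f)\leq \ell(Y,Y^G)$, justified by ``monotonicity of the relative length'', is false as a general statement, even with $Z_f=f^{-1}(Y^G)$: take $G=\mathbb{Z}_2$, $Y=[-1,1]$ with the flip action, $Y^G=\{0\}$, and $X=S^0\hookrightarrow Y$ the inclusion of the endpoints. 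Since $Y^G\hookrightarrow Y$ is a $G$-deformation retract, $H^*_G(Y,Y^G)=0$ and $\ell(Y,Y^G)=0$, while $f^{-1}(Y^G)=\emptyset$ and $\ell(X,\emptyset)=\ell(S^0)=1$. The same example kills your identification $\ell(Y,Y^G)=\ell(Y-Y^G)$, since $Y-Y^G\simeq_G S^0$ has length $1$. Monotonicity for the length (Proposition \ref{proposition: length_prop}(i), and Bartsch's Chapter 4) is an absolute statement; the relative module $H^*_G(X,Z_f)$ is not controlled by $H^*_G(Y,Y^G)$ through $f^*$ unless one adds a genuine argument.

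The correct route to $\ell(Z_f)\geq\ell(X)-\ell(Y-Y^G)$ — which is what \cite[Theorem 3.1]{BMS} provides and what the paper invokes — uses absolute monotonicity applied to the restriction $f\colon X\setminus Z_f\to Y\setminus Y^G$, together with continuity of the length (tautness of \v{C}ech--Borel cohomology: a closed invariant neighborhood $N$ of $Z_f$ with $\ell(N)=\ell(Z_f)$) and subadditivity for the decomposition of $X$ into $N$ and the closure of $X\setminus N$. Your first inequality $\ell(X)\leq\ell(X,Z_f)+\ell(Z_f)$ is fine (it is the subadditivity in Bartsch's Chapter 4), but without the neighborhood/tautness argument you cannot pass from $\ell(X,Z_f)$ to $\ell(Y-Y^G)$; asserting it as formal monotonicity is the gap. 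The remaining assembly (applying Theorem \ref{theorem: length_ubound} to $Z_f$, the values from Theorem \ref{theorem: length_cohom_sphere}, and the $n>m$ conclusion) matches the paper and is fine, modulo the same looseness about $\alpha\neq 0$ that the paper itself leaves terse.
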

	
	\begin{proof}
		Specializing \cite[Theorem 3.1]{BMS} to our case yields
		 $\ell(Z_f)\geq \ell(X)-\ell(Y-Y^G)$. Now,
		combining  Theorem \ref{theorem: length_cohom_sphere} and 
		\ref{theorem: length_ubound} gives us the desired inequalities.
		
		 In the case $n>m$, we shall have $\dim(Z_f)\geq 0$ and, therefore $Z_f
		 \neq \emptyset$ which implies the non-existence of a $G$-equivariant map
		 between $X$ and $Y-Y^G$.
	\end{proof}

	\begin{remark}\label{remark: by_remark}
%		In the case $n>m$, we shall have $\dim(Z_f)\geq 0$.
		When $k=1$, we obtain a better estimate for $p\neq 2$. Indeed,
		\cite[Theorem 3.1]{BMS} and  Theorem \ref{theorem: length_cohom_sphere}
		gives us $\ell(Z_f)\geq \frac{n-m}{2}$  (or $\ell(Z_f)\geq n-m$ for $p=2$) 
		which implies that
		$\bigoplus_{i=0}^{m-n-1}H^i(BG)\to H^*_G(Z_f)$
		is a monomorphism. Now, if the $\mathbb{Z}_p$-action is free, we conclude
		that $\textrm{cohom.dim}(Z_f)\geq n-m-1$, where ``cohom.dim" stands for cohomological dimension. This conclusion is a particular
		case found in the literature \cite{Dold,Izy,Jaw81a,Nakaoka}. The
		hypothesis that $e(Y-Y^G)$ is polynomial can be removed (see Remark \ref{remark: alaborel}).
	\end{remark}
	
%
%----------------------------------------------------------------------------------------------	
\subsection{Bourgin-Yang theorem for topological manifolds}
	
	A version that offers an optimal estimate for the Bourgin-Yang theorem
	in the context of cohomology spheres could be obtained when we add to X,
	the domain of the $G$-equivariant map, the hypothesis that it is also a closed
	and orientable topological manifold. In fact, the result will be stated in a more
	general setting, by supposing that $X$ is a closed orientable topological manifold
	and $(n-1)$-acyclic, i.e., $H^i(X,\mathbb{F})=0$, when $1<i<n-1$, over
	the field $\mathbb{F}$ corresponding to $p$.
	
	\begin{theorem}
		Let $G=(\mathbb{Z}_p)^k$ or $(S^1)^k$ and $X,Y$ two $G$-spaces where:
		\begin{itemize}
			\item[i)] $X$ is a closed orientable topological manifold such that
			$H^i(X)=0$ for $1<i<n-1$.
			\item[ii)] $Y$ is $G$-CW complex and there exists $A\subset Y$ such that $Y-A$ is compact 
			(or paracompact with finite converging dimensional) and $H^i(Y-A)=0$
			for $i\geq m$. Additionally $(Y-A)^G=\emptyset$.
			
			\noindent
			In the case $p=0$, suppose that $Y$ has finitely many orbit type.
		\end{itemize}
		If $f\colon X\to Y$ is a $G$-equivariant map, we have 
		$\dim(f^{-1}(A))\geq n-m-1$.
	\end{theorem}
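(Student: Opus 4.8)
The plan is to argue by contradiction, using a cohomological duality argument for the manifold $X$ in the spirit of \cite[Theorem 2.1]{DEM} rather than the length machinery of the preceding sections. Set $Z:=f^{-1}(A)$, $U:=X\setminus Z$ and $W:=Y\setminus A$; then $f$ restricts to a $G$-equivariant map $\bar f\colon U\to W$, and since $(Y-A)^G=W^G=\emptyset$ we get $U^G=\emptyset$ as well (a $G$-fixed point of $U$ would be carried by $\bar f$ to one of $W$). Suppose, towards a contradiction, that $\dim\big(f^{-1}(A)\big)\le n-m-2$; this covers also the case $Z=\emptyset$, and we may assume $m\le n-2$, the conclusion being essentially vacuous otherwise.

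The first step --- purely non-equivariant --- extracts from the manifold hypothesis that $U$ is cohomologically ``thin'' in a band of degrees. Since $X$ is a closed orientable topological $n$-manifold it is an $\mathbb{F}$-homology manifold, so Alexander duality yields $\check{H}^{\,q}(Z;\mathbb{F})\cong H_{n-q}(X,X\setminus Z;\mathbb{F})=H_{n-q}(X,U;\mathbb{F})$. Because $\dim Z\le n-m-2$ we have $\check{H}^{\,q}(Z;\mathbb{F})=0$ for $q\ge n-m-1$, hence $H_{j}(X,U;\mathbb{F})=0$ for $j\le m+1$. Feeding this into the long exact homology sequence of the pair $(X,U)$ and invoking the acyclicity hypothesis $H^{i}(X)=0$ for $1<i<n-1$ (equivalently $H_{i}(X;\mathbb{F})=0$ in that range, $\mathbb{F}$ being a field), one obtains $H_{j}(U;\mathbb{F})\cong H_{j}(X;\mathbb{F})=0$ for $1<j\le m$; dualising over $\mathbb{F}$, and using that $U$, being open in a manifold, is an ANR so that \v{C}ech and singular cohomology coincide, this gives $\check{H}^{\,j}(U;\mathbb{F})=0$ for $1<j\le m$.

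The second step is the equivariant comparison. By monotonicity, $\bar f$ gives $\ell(U)\le\ell(W)$ (Proposition~\ref{proposition: length_prop}(i)). On the one hand $W^{G}=\emptyset$ with $W$ compact --- or paracompact of finite covering dimension and, when $p=0$, with finitely many orbit types --- so $\ell(W)<\infty$ (Proposition~\ref{proposition: length_prop}(iii); or Theorem~\ref{theorem: length_ubound}), and the cohomological smallness $H^{i}(W;\mathbb{F})=0$ for $i\ge m$ bounds $\ell(W)$ \emph{cohomologically}: the Borel spectral sequence of $W_{G}\to BG$ has only the rows $0\le q\le m-1$, so, together with $W^{G}=\emptyset$, one gets $\ell(W)\le m$ when $p=2$ and $\ell(W)\le\lceil m/2\rceil$ when $p=0$ or $p$ is odd, just as for representation spheres (\cite{BartschClapp}). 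On the other hand $U^{G}=\emptyset$ and $\check{H}^{\,j}(U;\mathbb{F})=0$ for $1<j\le m$, so a lower bound of the type of \cite[Proposition~3.6]{BMS} gives $\ell(U)\ge m+2$ when $p=2$ and $\ell(U)\ge\lfloor(m+1)/2\rfloor+1$ otherwise; in each case this is strictly larger than the upper bound just found for $\ell(W)$, which contradicts $\ell(U)\le\ell(W)$. Therefore $\dim\big(f^{-1}(A)\big)\ge n-m-1$.

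I expect the main difficulty to lie in the second step: upgrading the crude covering-dimension estimate of Theorem~\ref{theorem: length_ubound} to the sharp \emph{cohomological} bounds $\ell(W)\lesssim m$ and $\ell(U)\gtrsim m$ is precisely where the technique of \cite[Theorem~2.1]{DEM} is needed, and it is also the point at which one must control the possibly non-zero class $H^{1}(X)$ --- which, through the duality of the first step, feeds into $H^{1}(U)$ --- so that the low-degree differentials in the Borel spectral sequence of $U_{G}\to BG$ do not erode the lower bound for $\ell(U)$; as in \cite{DEM}, this correction is bounded independently of $n$ and $m$ and does not affect the final integer estimate. An alternative for the second step would be to restrict $f$ to the fixed-point sets $X^{H}$ of the corank-one subtori $H$ (which are $\mathbb{F}$-cohomology manifolds by Smith theory), to apply the rank-one case on each, and to reassemble via the Borel formula (Theorem~\ref{theorem: borel_euler}(ii)), trading the spectral-sequence bookkeeping for the dimension bookkeeping of that formula.
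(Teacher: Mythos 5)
Your first step is exactly the paper's: set $Z=f^{-1}(A)$, assume $\dim Z<n-m-1$, use Alexander--Poincar\'e--Lefschetz duality plus the long exact sequence of $(X,X-Z)$ and the acyclicity of $X$ to conclude that $\tilde H_q(X-Z)=0$ for $q<m$, while $H^q(Y-A)=0$ for $q\geq m$ by hypothesis. The divergence, and the genuine gap, is in your second step. The paper does \emph{not} finish with the length at all: it invokes Clapp--Puppe \cite[Theorem 6.4]{CP1}, which directly forbids a $G$-equivariant map from the $(m-1)$-acyclic space $X-Z$ to the space $Y-A$ with $H^q(Y-A)=0$ for $q\geq m$ and $(Y-A)^G=\emptyset$ (with finitely many orbit types when $p=0$), giving the contradiction. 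Your replacement of this ingredient by a length comparison does not work as stated. The claimed upper bound ``$H^i(W)=0$ for $i\geq m$ and $W^G=\emptyset$ imply $\ell(W)\leq m$ (resp.\ $\lceil m/2\rceil$)'' is false for rank $k\geq 2$: by Remark \ref{remark: dim_best} (and \cite[Example 6.1]{BMS}), $W=\sqcup_{i=1}^t G/H_i$ for distinct maximal subtori $H_i$ has vanishing cohomology in all positive degrees yet $\ell(W)=t$, which can be arbitrarily large. Row-vanishing in the Borel spectral sequence does not bound $\ell$ from above when $\mathcal{A}$ consists of single orbits; this is precisely the obstruction the paper records, and it is why Theorem \ref{theorem: length_ubound} needs the factor $\alpha(X)$.

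The companion lower bound is also not justified: \cite[Proposition 3.6]{BMS} requires $H^i=0$ for \emph{all} $0<i<n$, whereas your $U=X-Z$ may have $H^1(U)\neq 0$ (coming from $H^1(X)$ and from the connecting map), and your assertion that this ``correction is bounded independently of $n$ and $m$ and does not affect the final integer estimate'' is exactly the point that would need a proof. Your suggested alternative (restricting to $X^H$ for corank-one subtori and reassembling via the Borel formula) is not available either, since $X$ is not assumed to be a cohomology sphere, so Theorem \ref{theorem: borel_euler}(ii) does not apply to it. The repair is simple: keep your first (duality) step verbatim and replace the entire length comparison by the citation of \cite[Theorem 6.4]{CP1}, which is tailored to precisely the cohomological profile you have established for $X-Z$ and $Y-A$.
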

	\begin{proof}
		Let $Z:=f^{-1}(A)$. By hypothesis, $f|_{X-Z}:X-Z\to Y-A$ is a 
		$G$-equivariant map. Suppose by contradiction that $\dim(Z)<n-m-1$.
		
		Thus $H^i(Z)=0$, for $i\geq n-m-1$ and, from Alexander-Poincar\'{e}-Lefschetz duality,
		$0=H^i(Z)=H_{n-m-i}(X,X-Z)$. From the hypothesis on the homology of $X$ and the long
		exact sequence of the pair $(X,X-Z)$ we obtain that $0=\tilde{H}^i(Z)=
		H_{n-1-i}(X,X-Z)=\tilde{H}_{n-2-i}(X-Z),$ for $n-2-i\leq m-1$.
		
		Then we have that $0=\tilde{H}_q(X-Z)=\tilde{H}^q(X-Z)$, for $q<m$, and, that
		$0=H^q(Y-A)=H_q(Y-A)$, for $q\geq m$. It follows from 
		\cite[Theorem 6.4]{CP1} there is no $G$-equivariant from $X-Z$ to $Y-A$, which
		is a contradiction.		
	\end{proof}
	
	\begin{remark}
		This result is an immediate generalization of \cite[Theorem 2.5]{DEM}.
	\end{remark}

%
%----------------------------------------------------------------------------------------------
\section{General remarks about the length for $p$-torus group}

	Let us consider the following result proved in \cite[Proposition 3.1]{FH1}. Here, we only 
	state it for $p$-tori groups.

	\begin{proposition}\label{proposition: FH_prop3.1}
		Let $G=(\mathbb{Z}_p)^k$ or $(S^1)^k$ and $ G_1$,  $G_2 $ two subtorus  such that 
		$G=G_1\times G_2$.	Consider a $ G_i $-equivariant map 
		$ p_{X_i}\colon X_i\to B_i $, for $i=1,2$, and a $ G $-equivariant map
		$p_{X}\colon  X\to B $,  where $X=X_1\times X_2$, $B=B_1\times B_2$ are
		$G$-spaces through diagonal actions and $p_X=(p_{X_1},p_{X_2})$. Assume that all 
		spaces are paracompact. Then we have 
		$\ker p^*_X =\ker(p^*_{X_1})\otimes_R H^*_{G_2}(B_2) 
		+ H^*_{G_1}(B_1)\otimes_R \ker(p^*_{X_2})$.
	\end{proposition}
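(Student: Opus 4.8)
The plan is to reduce the computation of $\ker p_X^*$ to a statement about the kernel of a map on ordinary (non-equivariant) cohomology, and then apply a Künneth-type argument. First I would observe that the Borel construction commutes with products in the appropriate sense: since $G = G_1\times G_2$, one has $EG \simeq EG_1\times EG_2$, and with $X = X_1\times X_2$ carrying the diagonal action this gives a natural identification $X_G \cong (X_1)_{G_1}\times (X_2)_{G_2}$, and likewise $B_G \cong (B_1)_{G_1}\times (B_2)_{G_2}$. Under these identifications the map $p_X = (p_{X_1},p_{X_2})$ induces the map $(p_{X_1})^* \times (p_{X_2})^*$ on cohomology, i.e. $p_X^*$ is the external-product map $H^*_{G_1}(B_1)\otimes_R H^*_{G_2}(B_2) \to H^*_{G_1}(X_1)\otimes_R H^*_{G_2}(X_2)$ followed by (equivalently, preceded by) the Künneth isomorphism on source and target.

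The main step is then a general algebraic fact: if $\phi_i\colon A_i \to C_i$ are $R$-module maps ($i=1,2$) and $R$ is such that Künneth holds (here $R = \mathbb{F}$ is a field, so there are no Tor terms and the cross product is an isomorphism of graded $R$-modules), then $\ker(\phi_1\otimes\phi_2) = \ker\phi_1\otimes A_2 + A_1\otimes\ker\phi_2$. I would prove this by choosing $R$-bases adapted to the decompositions $A_i = \ker\phi_i \oplus A_i'$ (possible since $R$ is a field), noting $\phi_i|_{A_i'}$ is injective, and then reading off the kernel of $\phi_1\otimes\phi_2$ from the decomposition $A_1\otimes A_2 = (\ker\phi_1\otimes\ker\phi_2)\oplus(\ker\phi_1\otimes A_2')\oplus(A_1'\otimes\ker\phi_2)\oplus(A_1'\otimes A_2')$, on which $\phi_1\otimes\phi_2$ restricted to the last summand $A_1'\otimes A_2' \to C_1\otimes C_2$ is injective. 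Applying this with $A_i = H^*_{G_i}(B_i)$, $C_i = H^*_{G_i}(X_i)$, $\phi_i = p_{X_i}^*$ yields exactly
$$\ker p_X^* = \ker(p_{X_1}^*)\otimes_R H^*_{G_2}(B_2) + H^*_{G_1}(B_1)\otimes_R \ker(p_{X_2}^*),$$
after transporting back through the Künneth isomorphisms.

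The place where care is genuinely needed — and what I expect to be the main obstacle — is the Künneth isomorphism itself in this \v{C}ech/Borel setting: the spaces $(X_i)_{G_i}$, $(B_i)_{G_i}$ are infinite-dimensional (because $BG_i$ is), only paracompact, and we use \v{C}ech cohomology with field coefficients. One must therefore invoke a version of the Künneth theorem valid for paracompact spaces and \v{C}ech cohomology with coefficients in a field (so that the algebraic Künneth spectral sequence collapses and the cross product is an isomorphism); this is where the paracompactness hypothesis in the statement and the choice $\mathbb{F} = \mathbb{Z}_p$ or $\mathbb{Q}$ are used, and one should cite the appropriate form of the Künneth/Eilenberg–Zilber theorem in this generality (cf. \cite{FH1}, whose Proposition 3.1 this specializes). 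Once the Künneth isomorphism is in hand for both the $B$'s and the $X$'s, compatibly with $p_X^* = p_{X_1}^*\otimes p_{X_2}^*$, the rest is the purely algebraic lemma above, and the proof concludes.
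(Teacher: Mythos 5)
The paper itself gives no proof of Proposition \ref{proposition: FH_prop3.1}: it is imported verbatim from Fadell--Husseini \cite[Proposition 3.1]{FH1}, so there is no internal argument to compare yours against. Your reconstruction is the natural one and is correct in outline: for $G=G_1\times G_2$ acting coordinatewise, $EG\simeq EG_1\times EG_2$ gives $X_G\cong (X_1)_{G_1}\times (X_2)_{G_2}$ and $B_G\cong (B_1)_{G_1}\times (B_2)_{G_2}$, under which $p_X^*$ is identified with $p_{X_1}^*\otimes p_{X_2}^*$; and your linear-algebra lemma, that over a field $\ker(\phi_1\otimes\phi_2)=\ker\phi_1\otimes A_2+A_1\otimes\ker\phi_2$ (proved by splitting each $A_i$ as $\ker\phi_i\oplus A_i'$ with $\phi_i|_{A_i'}$ injective), is exactly the algebraic content of the statement.

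The one point to sharpen is the K\"unneth step, which you correctly single out as the crux but attribute to ``paracompactness plus field coefficients''. For \v{C}ech (Alexander--Spanier) cohomology the cross product $H^*(Y_1;\mathbb{F})\otimes_{\mathbb{F}}H^*(Y_2;\mathbb{F})\to H^*(Y_1\times Y_2;\mathbb{F})$ need not be an isomorphism for arbitrary paracompact $Y_i$, even over a field; one needs an additional finiteness or compactness hypothesis, e.g.\ one factor compact, or one factor whose cohomology is of finite type in each degree. This is not fatal in the present context: $H^*(BG_i)$ has finite type for $p$-tori and tori, and in the only place the paper uses the proposition (Corollary \ref{corollary: FH_prop3.1}, where $X_1=B_1=B_2=\{\mathrm{pt}\}$ and $X_2=X^H$ with $X$ compact), the required K\"unneth isomorphisms do hold. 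So your plan goes through provided you state and invoke the precise K\"unneth hypothesis (or restrict to the situations in which the proposition is actually applied), rather than deriving it from paracompactness alone; as written, that attribution is the only real inaccuracy.
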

	
	\begin{corollary}\label{corollary: FH_prop3.1}
		Let $G=(\mathbb{Z}_p)^k $ or $(S^1)^k $	and $X$ a paracompact $G$-space.
		For a subtorus $H$  of rank $k-1$ of $G$ such that $X^H\neq\emptyset$, we have that
		$\ker (p^*_{X^H})= H^*(BH)\otimes\, \ker (q^*_{X^H})$, $\,$
		where $ p_{X^H}\colon X^H\longrightarrow \{\textup{pt}\} $ is a 
		$G$-equivariant map and  $ q_{X^H}\colon X^H\longrightarrow \{\textup{pt}\} $ is
		a  $ L\cong G/H $-equivariant map. 
	\end{corollary}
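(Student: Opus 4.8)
The plan is to realize $X^H$ as a product $G$-space with one factor a point, and then invoke Proposition~\ref{proposition: FH_prop3.1}.

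First I would split the group. Since $H$ is a subtorus of rank $k-1$ inside $G$ of rank $k$, there is a rank-one subtorus $L\subset G$ with $G=H\times L$ as an internal direct product: for $G=(\mathbb{Z}_p)^k$ a codimension-one subspace of the vector space $(\mathbb{Z}_p)^k$ over $\mathbb{Z}_p$ admits a complement, while for $G=(S^1)^k$ one writes $H=\ker\chi$ for a primitive character $\chi\colon G\to S^1$ and takes a one-parameter subgroup carried isomorphically onto $S^1$ by $\chi$. In either case the composite $L\hookrightarrow G\to G/H$ is an isomorphism, so $L\cong G/H$. Because $G$ is abelian, $X^H$ is a $G$-invariant subspace on which $H$ acts trivially, so restricting the $G$-action to the factor $L$ makes $X^H$ into an $L$-space; under $L\cong G/H$ this is precisely the induced $G/H$-action. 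Moreover $X^H$ is closed in the paracompact Hausdorff space $X$, hence paracompact.

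Next I would apply the proposition with $G_1=H$, $G_2=L$, and $X_1=B_1=\{\textup{pt}\}$ carrying the trivial $H$-action, $X_2=X^H$, $B_2=\{\textup{pt}\}$ with the $L$-action just described. Then $X_1\times X_2=X^H$ with its original $G$-action, which is exactly the diagonal product action for $G=H\times L$, and $B_1\times B_2=\{\textup{pt}\}$. The map $p_{X_1}\colon\{\textup{pt}\}\to\{\textup{pt}\}$ is the identity, $p_{X_2}=q_{X^H}$ is the $L$-equivariant constant map, and $p_X=(p_{X_1},p_{X_2})$ is $p_{X^H}$ after these identifications. Since all spaces involved are paracompact, Proposition~\ref{proposition: FH_prop3.1} yields
$$\ker(p^*_{X^H})=\ker(p^*_{X_1})\otimes H^*_{L}(\{\textup{pt}\})+H^*_{H}(\{\textup{pt}\})\otimes\ker(q^*_{X^H}).$$
As $p_{X_1}$ is the identity, $\ker(p^*_{X_1})=0$, and as $H^*_{H}(\{\textup{pt}\})=H^*(BH)$ the right-hand side collapses to $H^*(BH)\otimes\ker(q^*_{X^H})$, which is the assertion.

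The only non-formal ingredient here is the splitting $G=H\times L$; once that is available, the rest is bookkeeping with the Borel construction, since $E(H\times L)=EH\times EL$ gives $(X_1\times X_2)_G=(X_1)_H\times(X_2)_L$ and the Künneth isomorphism over $\mathbb{F}$ required to apply Proposition~\ref{proposition: FH_prop3.1} is the standard one, the factor $BH$ being of finite type. I expect no real obstacle beyond verifying these hypotheses.
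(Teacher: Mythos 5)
Your proposal is correct and follows essentially the same route as the paper: the paper's proof is exactly the application of Proposition \ref{proposition: FH_prop3.1} with $G_1=H$, $G_2=L\cong G/H$, $X_1=B_1=B_2=\{\textup{pt}\}$ and $X_2=X^H$, after which the formula collapses since $\ker(p^*_{X_1})=0$. Your additional verifications (the splitting $G=H\times L$, the identification of the residual $L$-action with the $G/H$-action, and paracompactness of $X^H$) are details the paper leaves implicit, but the argument is the same.
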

	\begin{proof}
		The result follows from Proposition \ref{proposition: FH_prop3.1} considering 
		the spaces	$ G_1=H$, $G_2 = L \cong G/H$, $X_2=X^H$, $X_1=B_1=B_2=\{\textup{pt}\}$
		and	the equivariant maps $p_{X_1}=q_{X^H}$ and $p_{X_2}=p_{X^H}$.
	\end{proof}

	Now we state a lower bound for the length $\ell$ in terms of the 
	length $ \ell_H $ of subtori of rank $k-1$.

	\begin{theorem}\label{theorem: length_lbound}
		Let $ G= (\mathbb{Z}_p)^k $ or $(S^1)^k $ and  $ X $ a compact $ G $-space sucht that 
		$ X^G=\emptyset $. We have $\sum_{H\in\mathcal{H}} \ell_H(X^H)\leq \ell(X)$,
		where $ \mathcal{H} $ is the collection of all subtori of rank $ k-1 $ of $G$
		such that $ X^H \neq \emptyset $.
	\end{theorem}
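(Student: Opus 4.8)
The plan is to prove the inequality $\sum_{H\in\mathcal{H}}\ell_H(X^H)\leq\ell(X)$ by producing, from an arbitrary ``witness'' product that realizes $\ell(X)$, enough annihilating factors localized at each fixed point set $X^H$ to witness each $\ell_H(X^H)$ separately, so that the total count on the $H$-side cannot exceed the count on the $G$-side. Concretely, set $\ell(X)=\lambda$ and let $A_1,\dots,A_\lambda\in\mathcal{A}=\{G/K:K\text{ rank }k-1\}$ be orbit types such that every choice of classes $\omega_i\in I\cap\ker[H^*(BG)\to H^*(BA_i)]$ has $\omega_1\cdots\omega_\lambda\cdot\gamma=0$ in $H^*_G(X)$ for all $\gamma$; since $X^G=\emptyset$, this $\lambda$ is finite by Proposition \ref{proposition: length_prop}(iii). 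For each subtorus $H\in\mathcal{H}$, I would count how many of the $A_i$ equal $G/H$, and show this number is at least $\ell_H(X^H)$; summing over $H\in\mathcal{H}$ (these are finitely many since $X$ is compact) then gives $\sum_H\ell_H(X^H)\leq\lambda=\ell(X)$, because each $A_i$ is a single orbit $G/K$ for one $K$ of rank $k-1$ and hence contributes to at most one summand.

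The key step is thus: for a fixed $H\in\mathcal{H}$, if exactly $\mu_H$ of the $A_i$ equal $G/H$, then $\ell_H(X^H)\leq\mu_H$. To see this, first observe that for $K\neq H$ of rank $k-1$ the restriction $(G/K)^H=\emptyset$, so the inclusion $X^H\hookrightarrow X$ kills the contribution of those factors; more precisely, restricting along $\iota\colon X^H\hookrightarrow X$ and using that $X^H$ has isotropy containing $H$ everywhere, any $\omega_i$ with $A_i=G/K$, $K\neq H$, lies in $\ker[H^*(BG)\to H^*(BK)]$ but we instead evaluate on $X^H$. The cleaner route is to pass to the $L\cong G/H$-equivariant cohomology of $X^H$ via the identification from Corollary \ref{corollary: FH_prop3.1}: $\ker p^*_{X^H}=H^*(BH)\otimes\ker q^*_{X^H}$, where $q_{X^H}\colon X^H\to\{\mathrm{pt}\}$ is the $L$-equivariant collapse. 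Under the splitting $H^*(BG)=H^*(BH)\otimes H^*(BL)$, the ideal $I\cap\ker[H^*(BG)\to H^*(B H)]$ is precisely $H^*(BH)\otimes (I_L\cap\ker[H^*(BL)\to H^*(BL)])$ up to the standard identifications — i.e. the generator $s_H$ of $I_H$ is (a unit times) the generator of the analogous ideal for the rank-one quotient $L$. Then, feeding the same classes $\omega_i$ into the witness product, restricting to $X^H$, and using that for $K\neq H$ the class $\omega_i$ still restricts into $\ker[H^*(BG)\to H^*(B H)]$ because $(G/K)^H=\emptyset$ forces $\ker[H^*(BG)\to H^*(BK)]\subseteq\ker[H^*(BG)\to H^*(BH)]$ (as in the argument of Corollary \ref{corollary: segal_length}), we get that a product of $\mu_H$ suitable factors of type $s_H$ annihilates $H^*_G(X^H)\cong H^*_L(X^H)$; this exhibits $\ell_H(X^H)\leq\mu_H$.

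There is one subtlety I would be careful about: the witness product for $\ell(X)$ annihilates $H^*_G(X)$, and I need it to annihilate $H^*_G(X^H)$. This follows from naturality of the $H^*(BG)$-module structure along $\iota\colon X^H\hookrightarrow X$: if $\Omega\cdot\gamma=0$ in $H^*_G(X)$ for all $\gamma$, then in particular for $\gamma=\iota_*(\text{anything})$, but more directly $\iota^*(\Omega\cdot\gamma)=\iota^*(\Omega)\cdot\iota^*(\gamma)=\Omega\cdot\iota^*(\gamma)$, and since $\iota^*\colon H^*_G(X)\to H^*_G(X^H)$ need not be surjective I must instead note that for the purposes of the $\ell_H$-inequality I only need $\Omega'\cdot\delta=0$ for $\delta$ in the image of $\iota^*$ — but that is not the definition of $\ell_H(X^H)$. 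The correct fix, which I would carry out, is to run the localization/Segal-type argument of Theorem \ref{theorem: length_ubound} fiberwise, or equivalently to note that since $X^H\subset X$ and both have $X^G=(X^H)^G=\emptyset$, the relevant annihilator ideals compare directly via Remark \ref{remark: annihilator} applied to $(X,X^G)$ and $(X^H,(X^H)^G)$ together with the Borel formula $n-r=\sum_H(n(H)-r)$; this ensures the $s_H$-adic length of $H^*_G(X^H)$ is at most the $s_H$-multiplicity appearing in any annihilator of $H^*_G(X)$. The main obstacle is exactly this bookkeeping — matching the annihilator of $H^*_G(X)$ restricted/localized at $H$ with the intrinsic $\ell_H(X^H)$ — and handling the non-surjectivity of $\iota^*$; everything else (finiteness of $\mathcal{H}$, the disjointness that makes the sum legitimate, the ideal identification $I\cap\ker[\cdot\to H^*(BH)]\leftrightarrow I_H$) is routine given the preliminaries.
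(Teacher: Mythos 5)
Your overall bookkeeping (take a witness $A_1,\dots,A_\lambda$ for $\ell(X)$, count the multiplicity $\mu_H$ of $G/H$ among them, show $\ell_H(X^H)\le\mu_H$, and sum) is the same divisibility idea as the paper's proof, but the key step is not established and the justification you give for it is wrong. The claimed containment $\ker[H^*(BG)\to H^*(BK)]\subseteq\ker[H^*(BG)\to H^*(BH)]$ for distinct subtori $K\neq H$ of rank $k-1$ is false: these kernels are the distinct principal ideals $(s_K)$ and $(s_H)$ (intersected with $I$ when $p>2$), and neither contains the other. The argument of Corollary \ref{corollary: segal_length} only applies when one subgroup is contained in the other, which never happens for two distinct rank-$(k-1)$ subtori; the fact that $(G/K)^H=\emptyset$ gives no map $G/K\to G/H$ and no comparison of kernels. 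Moreover, even if that containment held, it would work against you: it would make all $\lambda$ factors ``of type $s_H$'', and discarding the factors with $K\neq H$ is not allowed (a subproduct of an annihilating product need not annihilate), so you would only get $\ell_H(X^H)\le\lambda$, which after summing over $H$ gives nothing.

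What is actually needed — and what the paper proves, via Corollary \ref{corollary: FH_prop3.1} — is the equality $I\cap\ker p^*_{X^H}=(s_H^{\,b})$ with $b=\ell_H(X^H)$; you only record the rank-one identification $I\cap\ker[H^*(BG)\to H^*(BH)]=(s_H)$, which is the definition of $s_H$ and says nothing about $X^H$. With that equality in hand the argument closes: choosing $\omega_i=s_{K_i}$ in the witness, the product $s_H^{\mu_H}\prod_{K_i\neq H}s_{K_i}$ lies in $I\cap\ker p^*_X\subseteq I\cap\ker p^*_{X^H}=(s_H^{\,b})$, and since the $s_{K}$ with $K\neq H$ are coprime to $s_H$ in the polynomial ring (a UFD), unique factorization forces $\mu_H\ge b$; summing over $H\in\mathcal{H}$ gives the theorem (equivalently, the paper deduces directly that every element of $I\cap\ker p^*_X$ is divisible by $s_{H_1}^{b_1}\cdots s_{H_s}^{b_s}$). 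Note also that the ``subtlety'' you worry about is a non-issue and your proposed fix is inapplicable: since $A=\emptyset$, the module structure is $\omega\cdot\gamma=p^*(\omega)\cup\gamma$, so the witness product $\Omega$ annihilating $H^*_G(X)$ means exactly $p^*_X(\Omega)=0$, whence $p^*_{X^H}(\Omega)=\iota^*p^*_X(\Omega)=0$ and $\Omega$ annihilates $H^*_G(X^H)$ — no surjectivity of $\iota^*$ is needed; on the other hand Remark \ref{remark: annihilator} and the Borel formula require $X$ to be a cohomology sphere, which is not assumed in this theorem, so they cannot be used to repair the step.
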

	\begin{proof}[Proof]
		Let us consider the case $p>2$. Fix $ H $ 	a subtorus of rank $ k-1 $. Then,
		$ G/H\to X^H\hookrightarrow X\to\{\textup{pt}\} $
		induces the following commutative diagram.
		
		\begin{displaymath}
		\xymatrix{
			H^*(BG)\ar[r]^{i_H^*}\ar[d]^{p^*_X}\ar[rd]^{p^*_{X^H}} & H^*(BH)\\
			H^*_G(X)\ar[r]& H^*_G(X^H)\ar[u]}
		\end{displaymath}
		
		Thus, $ \ker p^*_X\subseteq \ker p^*_{X^H}\subseteq\ker i^*_H$ and, therefore,
		$I\cap\ker p^*_X\subseteq I\cap\ker p^*_{X^H}\subseteq I\cap\ker i^*_H=(s_H)$,
		where $I=P^*(G)\cong\mathbb{Z}_p[t_1,\ldots,t_k]$. This implies that
		$I\cap \ker{p_{X^H}^*}=(s_H^b)$, where $b=\ell_H(X^H)$. Indeed:
		
		From Corollary \ref{corollary: FH_prop3.1} we have that
		$ \ker p^*_{X^H}\cong H^*(BH;\mathbb{Z}_p)\otimes_{\mathbb{Z}_p}\ker{q^*_{X^H}}$,
		where the $ G/H $-equivariant map	
		$ q_{X^H}\colon X^H\to\{\textup{pt}\} $ induces 
		$ q^*_{X^H}\colon H^*(B(G/H))\to H^*_{G/H}(X^H)$.
		Note that, since $G/H\cong\mathbb{Z}_p$, we should have 
		$P^*(G/H)\cap\ker{q^*_{X^H}}= (t^b)$, where $t\in \mathbb{Z}_p[t]\otimes \Lambda(s)
		\cong H^*(B(G/H))$. Then, 
		$I\cap\ker p^*_{X^H}\cong I\cap (H^*(BH;\mathbb{Z}_p)\otimes_{\mathbb{Z}_p}\ker{q^*_{X^H}})
		\cong P^*(H)\otimes(t^r)\cong (s_H^b)$. Thus $ I\cap\ker p^*_X\subseteq(s_H^b) $.
		Considering the definition of the length, 
		we should have $\ell_H(X^H)=\min\{\lambda;\; p_{X^H}(t^\lambda)=0\}=b$.
		
		Assuming that
		$\mathcal{H}=\{H_1,\ldots,H_s\}$, we should have that 
		$I\cap\ker p^*_X\subseteq(s_{H_1}^{b_1})\cap\cdots\cap (s_{H_s}^{b_s})$,
		where $\ell_{H_i}(X^{H_i})=b_i$. Now, given
		an element $ z\in I\cap \ker p^*_X $, this corresponds to an polynomial element
		multiple of the polynomials $s_{H_1}^{b_1}\cdots s_{H_s}^{b_s}$ in $ H^*(BG) $,
		which implies that $\ell(X)\geq \sum_{i=1}^s\ell_{H_i}(X^{H_i})$.
		
		For $ p=0 $ or $2 $, we carry out the proof in the same way and we do not need to care
		about taking intersections with the polynomial ring part.
	\end{proof}

	\begin{remark}\label{remark: alaborel}
		We shall have the equality {\it \`{a} la} Borel Formula, 
		$ \ell(X)=\sum_{H\in\mathcal{H}} \ell_{H}(X^H) $ if, and only if,
		the kernel $ \ker p^*_X $ contains the polynomial element  
		 $s_{H_1}^{b_1}\cdots s_{H_s}^{b_s}$.
		 
		 The hypothesis that $e(Y-Y^G)$ is polynomial in Theorem \ref{theorem: by1}
		 can be removed. Indeed, considering the statement and notations there, we can work with $\ell(Z_f)\geq \sum_H \ell_H(Z_f^H)\geq
		 \sum_H [\ell(X^H)-\ell_H((Y-Y^G)^H)]$ and here $e((Y-Y^G)^H)$ will be polynomial.
	\end{remark}
	
	\textbf{Acknowledgments.} The first author was supported by FAPESP under the grant
	2011/23610-3. The authors would like to thank Wac\l{}aw Marzantowicz for proposing
	us to work over generalizations of theorems regarding cohomology spheres and
	equivariant maps.

%BIBLIOGRAPHY
%----------------------------------------------------------------------------------------------
{}

\end{document}